\newtheorem{theorem}{Theorem}[section]
\newtheorem{lemma}[theorem]{Lemma}
\newtheorem{assumption}{Assumption}
\providecommand{\keywords}[1]{\textbf{\textbf{Key Words:}} #1}
\begin{document}

\title{\bf Existence and Consistency of the Maximum Pseudo $\beta$-Likelihood Estimators for Multivariate Normal Mixture Models}
\author{Soumya Chakraborty\footnote{Joint Affiliation at Bethune College (Assistant Professor) and Indian Statistical Institute (Research Fellow)}, Ayanendranath Basu\footnote{ayanbasu@isical.ac.in (Corresponding Author)} and Abhik Ghosh
\\
Indian Statistical Institute, Kolkata, India.   
}

\maketitle

\begin{abstract}
Robust estimation under multivariate normal (MVN) mixture model is always a computational challenge. A recently proposed maximum pseudo $\beta$-likelihood estimator aims to estimate the unknown parameters of a MVN mixture model in the spirit of minimum density power divergence (DPD) methodology but with a relatively simpler and tractable computational algorithm even for larger dimensions. In this letter, we will rigorously derive the existence and weak consistency of the maximum pseudo $\beta$-likelihood estimator in case of MVN mixture models under a reasonable set of assumptions. Two real data examples are also presented in order to assess the practical utility of our method.
\\
\\
\keywords{Consistency, Density Power Divergence, Glivenko-Cantelli Class, Vapnik-$\breve{C}$ervonenkis Class.}

\end{abstract}
\section{Introduction}
Mixture models arise in many practical scenarios when the variable of interest depends on certain categorical attributes. Finite mixture models with various shapes (for the individual components) have been introduced and extensively studied in the statistical literature (see, e.g., McLachlan and Peel $(2004)$ \cite{peel}). Normal mixture models are perhaps the most flexible among the introduced mixture probability distributions which can fit a large variety of datasets. In the era of big data, practical datasets contain information about different attributes, and thus, they are naturally multivariate. Hence, multivariate normal mixture models are of great importance for modelling these datasets. Some well-known practical applications of mixture models (including the normal mixture model) include clustering, voice recognition, medical image reconstruction, handwriting discrimination and much more. 

Mathematically, a $p$-dimensional normal mixture distribution with $k$ components has the probability density function (PDF) as given by
\begin{align}
\label{nmmden}
f_{\boldsymbol{\theta}}(\boldsymbol{x})=\sum_{j=1}^{k}\pi_{j}\phi_{p}(\boldsymbol{x},\boldsymbol{\mu}_{j},\boldsymbol{\Sigma}_{j}),
\end{align}
where $\phi_{p}(\cdot,\boldsymbol{\mu},\boldsymbol{\Sigma})$ denotes the PDF of a $p$-dimensional multivariate normal (MVN) denisty with mean $\boldsymbol{\mu}$ and dispersion matrix $\boldsymbol{\Sigma}$, $\boldsymbol{\mu}_{j} \in  \mathbb{R}^{p}$ is the $j$-th component mean, $\boldsymbol{\Sigma}_{j}$ is the $j$-th component dispersion matrix (which is assumed to be a $p$-dimensional real, symmetric and positive definite matrix) and $0 \leq \pi_{j} \leq 1$ is the $j$-th component weight, for $ j=1,2,...,k$, with $\sum_{j=1}^{k}\pi_{j}=1$. 

It is essential for the analyses of multivariate data with MVN mixture model to appropriately estimate the unknown component parameters, i.e., ($\mu_j$, $\Sigma_j$, $\pi_j$) for all $j=1, \ldots, k$, where $k$ is usually a pre-fixed number of components in the mixture modeling (note that, we can also choose $k$ based on the observed data through some data-driven algorithms, but we will not consider that aspect in this letter). Maximum likelihood estimators of these unknown parameters are not straightforward to obtain as closed form solution of the corresponding estimating (score) equations do not exist. EM type algorithms are generally used to solve the estimating equations. But these estimators are highly non-robust in the presence of contamination in the sample data. Thus, robust estimation of the parameters in a MVN mixture model is important in order to produce stable results and inference. Various robust methodologies have been derived for normal mixture models following different philosophies, eg., trimming (Cuesta-Albertos et al.~$(1997)$ \cite{trimmedkmeans}, Garc\'{i}a-Escudero et al.~$(2008)$ \cite{tclust}), divergence minimization (Fujisawa and Eguchi $(2006)$ \cite{dpdjapan}) and others. Chakraborty et al.~$(2022)$ \cite{arxiv} have recently proposed a robust procedure (known as the maximum pseudo $\beta$-likelihood estimation) which performs robust estimation of the  component parameters, along with data clustering and anomaly detection in the spirit of minimum density power divergence (DPD) estimation principle (Basu et al.~$1998$ \cite{basupaper}, $2011$ \cite{basubook}). These authors have argued that the original minimum DPD estimator under the Gaussian mixture models, as studied by Fujisawa and Eguchi $(2006)$ \cite{dpdjapan}, is difficult to implement in case of multivariate data with higher dimensions whereas the maximum pseudo $\beta$-likelihood estimator (MPLE$_\beta$) can easily be computed but continues to have high efficiency under pure and strong stability under contaminated multivariate data. However, Chakraborty et al. (2022) \cite{arxiv} presented the complete methodology of their MPLE$_\beta$ algorithm, the asymptotic properties of these estimators have not been discussed. 

This article aims to establish the existence and consistency of the MPLE$_\beta$ in case of the MVN mixture models under a set of reasonable sufficient conditions. Modern empirical process arguments are utilized to establish consistency of the estimators. Section \ref{sec2} presents a brief description of maximum pseudo $\beta$-likelihood estimation. Section \ref{SEC:theoretical_result} provides the detailed proofs of the two main results on existence and consistency. The application of our method is illustrated in Section \ref{SLC} on two real data sets. Concluding remarks are given in Section \ref{con}. The Appendix contains a few relevant lemmas. 
\section{Maximum Pseudo $\beta$-Likelihood Estimation}
\label{sec2}
 Let, $\boldsymbol{X}_{1}, \boldsymbol{X}_{2},...,\boldsymbol{X}_{n}$ be a random sample drawn from a $p$-dimensional MVN mixture distribution with $k$ components and the unknown PDF of this random sample is modelled by the family $\{f_{\boldsymbol{\theta}}:\boldsymbol{\theta} \in \boldsymbol{\Theta}\}$ with $f_{\boldsymbol{\theta}}$ as in Equation (\ref{nmmden}). The whole parameter $\boldsymbol{\theta}$ is given by $\boldsymbol{\theta} = (\pi_{1}, \pi_{2},...., \pi_{k}, \boldsymbol{\mu}_{1}, \boldsymbol{\mu}_{2},...,
 \boldsymbol{\mu}_{k}, \boldsymbol{\Sigma}_{1}, \boldsymbol{\Sigma}_{2}\\,..., \boldsymbol{\Sigma}_{k})$. The robust MPLE$_\beta$ of $\boldsymbol{\theta}$ is defined as the maximizer of the empirical pseudo $\beta$-likelihood function given by (see Chakraborty et al.~$(2022)$ \cite{arxiv} for more details) 
\begin{equation}
\centering
\label{Eq4}
L_{\beta}(\boldsymbol{\theta},F_{n})=E_{F_{n}}\left[\sum_{j=1}^{k}Z_{j}(\boldsymbol{X},\boldsymbol{\theta})\left[\log\;\pi_{j}+\frac{1}{\beta}\phi_{p}^{\beta}(\boldsymbol{X},\boldsymbol{\mu}_{j},\boldsymbol{\Sigma}_{j})-\frac{1}{1+\beta}\int \phi_{p}^{1+\beta}(\boldsymbol{x},\boldsymbol{\mu}_{j},\boldsymbol{\Sigma}_{j}) \;d\boldsymbol{x}\right]\right],
\end{equation}
where $F_{n}$ is the empirical cumulative distribution function based on the random sample $\boldsymbol{X}_{1}, \boldsymbol{X}_{2},...,\boldsymbol{X}_{n}$, and $Z_{j}(\cdot,\boldsymbol{\theta})$, $j=1,\ldots,\;k$, are the assignment functions given by the indicators  $Z_{j}(\boldsymbol{X}_{i},\boldsymbol{\theta})=I[D(\boldsymbol{X}_{i},\boldsymbol{\theta})\\=D_{j}(\boldsymbol{X}_{i},\boldsymbol{\theta})]$ with $ D_{j}(\boldsymbol{X},\boldsymbol{\theta}) = \pi_{j}\phi_{p}(\boldsymbol{X},\boldsymbol{\mu}_{j},\boldsymbol{\Sigma}_{j})  \;\; \text{and}\;D(\boldsymbol{X},\boldsymbol{\theta}) =\underset{1 \leq j \leq k}{\text{max}}D_{j}(\boldsymbol{X},\boldsymbol{\theta})$. Here, $\beta$ is a tuning parameter that controls the trade-off between robustness and asymptotic efficiency of the resulting estimators.  

Assuming $F$ to be the cumulative distribution function (true and unknown) of $\boldsymbol{X}_{1}$, the corresponding theoretical objective function (the population pseudo $\beta$-likelihood) is given by,
\begin{equation}
\centering
\label{Eq5}
L_{\beta}(\boldsymbol{\theta},F)=E_{F}\left[\sum_{j=1}^{k}Z_{j}(\boldsymbol{X},\boldsymbol{\theta})\left[\log\;\pi_{j}+\frac{1}{\beta}\phi_{p}^{\beta}(\boldsymbol{X},\boldsymbol{\mu}_{j},\boldsymbol{\Sigma}_{j})-\frac{1}{1+\beta}\int \phi_{p}^{1+\beta}(\boldsymbol{x},\boldsymbol{\mu}_{j},\boldsymbol{\Sigma}_{j}) \;d\boldsymbol{x}\right]\right].
 \end{equation}
 For technical reasons (see Chakraborty et al.~$(2022)$ \cite{arxiv}), the empirical objective function is maximized subject to the following pair of constraints, namely, the eigenvalue ratio (ER) and non-singularity (NS) constraints. Here, we assume $\lambda_{jl}$ to be the $l$-th eigenvalue of the covariance matrix $\boldsymbol{\Sigma}_{j}$ for $1 \leq j \leq k$ and $1 \leq l \leq p$,
 and put  $M=\underset{1 \leq j \leq k}{\text{max}}\;\underset{1 \leq l \leq p}{\text{max}}\lambda_{jl}$ and 
 $m=\underset{1 \leq j \leq k}{\text{min}}\;\underset{1 \leq l \leq p}{\text{min}}\lambda_{jl}$, the largest and smallest eigenvalues, respectively.
 \begin{itemize}
 
 	\item \noindent\textbf{Eigenvalue Ratio (ER) Constraint:}  For a prespecified constant $c \geq 1$, the component dispersion matrices, namely, $\boldsymbol{\Sigma}_{1}, \boldsymbol{\Sigma}_{2},..., \boldsymbol{\Sigma}_{k}$, satisfy
 	\begin{equation}
 	\centering
 	\label{Eq6}
 	\frac{M}{m} \leq c. 
 	\end{equation}
 	\item \noindent \textbf{Non-singularity (NS) Constraint:} We assume that the smallest eigenvalue $m$ satisfies
 	$
 	m\geq c_{1}
 	$
 	for some small positive constant $c_{1}$ which is prespecified.
 \end{itemize}
 Under the above two constraints, characterized by constants  $C=(c,c_{1})$, 
 our search for the estimators can be confined with the restricted parameter space defined as
 $$
 \boldsymbol{\Theta}_{C}=\left\{\boldsymbol{\theta}:\boldsymbol{\theta}= (\pi_{1}, \pi_{2},...., \pi_{k}, \boldsymbol{\mu}_{1}, \boldsymbol{\mu}_{2},...,\boldsymbol{\mu}_{k}, \boldsymbol{\Sigma}_{1}, \boldsymbol{\Sigma}_{2},..., \boldsymbol{\Sigma}_{k})\;\text{with}\; \frac{M}{m} \leq c \mbox{     and } m\geq c_{1} \right\}.
 $$
Although, the methodology (and the influence function analysis) of the resulting estimators have been extensively discussed in Chakraborty et al.~$(2022)$ \cite{arxiv}, the asymptotic properties of the same remain to be established. In this article, we are going to prove the existence (of the optimizer of the aforesaid constrained optimization problem) and the weak consistency of the resulting estimators in the following section.  
\section{Theoretical Results}
\label{SEC:theoretical_result}
Let us first state the following technical assumption which will be crucial to prove some of our results.
\begin{assumption}
	\label{assump1}
	For the true (unknown) distribution function $F=\sum_{j=1}^{k}\pi^{0}_{j}\phi_{p}(\cdot,\boldsymbol{\mu^{0}_{j}},\boldsymbol{\Sigma^{0}_{j}})$, let us assume that, $\underset{1\leq j \leq k}{max}\pi^{0}_{j}\geq (1+k^{0})\frac{\beta}{(1+\beta)^{1+\frac{p}{2}}}$ for some $k^{0}>0$.
	
\end{assumption}

Firstly, we present the result on the existence of the optimizers (both empirical and population versions) and then the consistency of the resulting estimators under the MVN mixture models. 



\begin{theorem}[Existence]\label{THM:Existence}
Let $P$ be a  probability distribution (either $F$ or $F_{n}$ in our case) and Assumption \ref{assump1} is satisfied by $F$. 
Then, there exists  $\boldsymbol{\theta} \in \boldsymbol{\Theta}_{C}$ that maximizes
	\begin{align*}
	 L_{\beta}(\boldsymbol{\theta},P)=E_{P}\left[\sum_{j=1}^{k}Z_{j}(\boldsymbol{X},\boldsymbol{\theta})\left[\log\;\pi_{j}+\frac{1}{\beta}\phi_{p}^{\beta}(\boldsymbol{X},\boldsymbol{\mu}_{j},\boldsymbol{\Sigma}_{j})-\frac{1}{1+\beta}\int \phi_{p}^{1+\beta}(\boldsymbol{x},\boldsymbol{\mu}_{j},\boldsymbol{\Sigma}_{j}) \;d\boldsymbol{x}\right]\right]
	\end{align*}
under the ER and NS constraints (for a sufficiently large sample size $n$ in case of $F_n$).
\end{theorem}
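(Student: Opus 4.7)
The parameter space $\boldsymbol{\Theta}_C$ is closed but non-compact, since the component means range over $\mathbb{R}^{kp}$ and the eigenvalues, while bounded below by $c_1$, can be arbitrarily large. The plan follows the standard compactification strategy: (a) bound $L_\beta(\cdot,P)$ uniformly from above on $\boldsymbol{\Theta}_C$, (b) show that any maximising sequence is eventually confined to a compact subset, and (c) conclude via an extreme value argument using (upper semi-) continuity of $L_\beta$. Step (a) is immediate from the NS constraint $m\geq c_1$, which bounds $|\boldsymbol{\Sigma}_j|^{-1/2}\leq c_1^{-p/2}$ and hence caps both the pointwise density $\phi_p(\cdot,\boldsymbol{\mu}_j,\boldsymbol{\Sigma}_j)$ and the explicit integral $\int\phi_p^{1+\beta}(\boldsymbol{x},\boldsymbol{\mu}_j,\boldsymbol{\Sigma}_j)\,d\boldsymbol{x}=(2\pi)^{-p\beta/2}(1+\beta)^{-p/2}|\boldsymbol{\Sigma}_j|^{-\beta/2}$ by constants depending only on $(c_1,p,\beta)$; combined with $\log\pi_j\leq 0$ and $\sum_j Z_j=1$, this gives a finite uniform upper bound for $L_\beta(\boldsymbol{\theta},P)$.

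Step (b) is the main work. For $P=F$, I would exhibit a benchmark $\boldsymbol{\theta}^{\star}\in\boldsymbol{\Theta}_C$ --- essentially $\boldsymbol{\theta}^0$, with eigenvalues truncated into $[c_1,c\,c_1]$ if they happen to lie outside --- and derive an explicit lower bound $L_\beta(\boldsymbol{\theta}^{\star},F)\geq v^{\star}$. The specific threshold $(1+k^0)\beta/(1+\beta)^{1+p/2}$ in Assumption \ref{assump1} should be calibrated precisely so that $v^{\star}$ strictly exceeds the limiting value of $L_\beta$ in each of the three ``escape'' modes along a sequence $\{\boldsymbol{\theta}^{(\ell)}\}\subset\boldsymbol{\Theta}_C$: $\|\boldsymbol{\mu}_j^{(\ell)}\|\to\infty$ for some $j$, the largest eigenvalue $M^{(\ell)}\to\infty$, or $\pi_j^{(\ell)}\to 0$ while $Z_j$ stays active on positive $F$-mass. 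In all three cases the density terms $\tfrac{1}{\beta}\phi_p^\beta$ and $\tfrac{1}{1+\beta}\int\phi_p^{1+\beta}$ either vanish or are dominated, leaving the objective bounded above by a value strictly less than $v^{\star}$. This confines any maximising sequence to a compact subset $\boldsymbol{\Theta}_C'\subset\boldsymbol{\Theta}_C$ with bounded means, bounded eigenvalues, and weights bounded away from $0$. For $P=F_n$, a uniform law of large numbers (of the Glivenko--Cantelli type to be invoked later for consistency) transfers the conclusion: for sufficiently large $n$, $L_\beta(\boldsymbol{\theta}^{\star},F_n)$ is close to $v^{\star}$ and the same coercivity inequalities continue to hold with margin, which accounts for the ``sufficiently large $n$'' qualifier in the statement.

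On $\boldsymbol{\Theta}_C'$, the map $\boldsymbol{\theta}\mapsto L_\beta(\boldsymbol{\theta},P)$ is continuous on each ``cell'' where the assignment vector $(Z_j(\boldsymbol{x},\cdot))_j$ is constant. For $P=F_n$ the relevant partition is finite (at most $k^n$ closed cells, one per assignment of the $n$ data points to components), so maximising the continuous restriction on each closed cell and taking the overall best yields existence. For $P=F$, the tie locus $\{D_j=D_l:j\neq l\}$ has $F$-measure zero by absolute continuity of the MVN mixture, so $L_\beta(\cdot,F)$ is in fact continuous on $\boldsymbol{\Theta}_C'$ and the classical Weierstrass theorem applies. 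I expect the hard part to be step (b): quantitatively calibrating $v^{\star}$ against the three degenerate limits and verifying that the constant appearing in Assumption \ref{assump1} is exactly the threshold forcing the strict inequality to hold uniformly over all three escape modes.
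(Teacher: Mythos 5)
Your overall strategy is the same as the paper's: exhibit a benchmark parameter whose objective value exceeds the limiting value of $L_{\beta}$ along every ``escape'' sequence, conclude that a maximising sequence stays in a compact subset of $\boldsymbol{\Theta}_{C}$, and finish by passing to a convergent subsequence. You have, however, left unproved exactly the step on which the whole argument turns, and the benchmark you propose is very likely the wrong one. Taking $\boldsymbol{\theta}^{\star}$ to be ``essentially $\boldsymbol{\theta}^{0}$ with truncated eigenvalues'' gives a value containing the term $E_{F}\bigl[\sum_{j}Z_{j}\log\pi_{j}^{0}\bigr]$, which is strictly negative and can dominate the bounded positive density terms (think of $k$ balanced components, where each $\log\pi_{j}^{0}=-\log k$). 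Meanwhile an escape sequence can push $L_{\beta}$ arbitrarily close to $0$ from below: take $\pi_{1}^{r}\rightarrow 1$ and $M_{r}\rightarrow\infty$, so that the ER constraint forces $m_{r}\rightarrow\infty$, both $\frac{1}{\beta}\phi_{p}^{\beta}$ and $\frac{1}{1+\beta}\int\phi_{p}^{1+\beta}$ vanish, and only the $\log\pi_{j}^{r}$ terms survive. A negative (or even zero) $v^{\star}$ therefore cannot strictly separate the benchmark from the escape modes; you genuinely need $v^{\star}>0$, and your candidate does not deliver it.

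The paper's resolution, which is the one non-obvious idea here, is to use a deliberately degenerate benchmark: put all the weight on the single component of $F$ with the largest true proportion ($\pi_{1}^{a}=1$, so the $\log\pi$ contribution is exactly $0$), keep its true mean, and inflate its covariance to $c'\boldsymbol{\Sigma}_{1}^{0}$. A Gaussian product integral then gives
\begin{align*}
L_{\beta}(\boldsymbol{\theta}^{a},F)\;\geq\;\frac{1}{\beta(2\pi)^{\frac{p\beta}{2}}|c'\boldsymbol{\Sigma}^{0}_{1}|^{\frac{\beta}{2}}\bigl(1+\frac{\beta}{c'}\bigr)^{\frac{p}{2}}}\left[\pi^{0}_{1}-\frac{\beta}{(1+\beta)^{1+\frac{p}{2}}}\Bigl(1+\frac{\beta}{c'}\Bigr)^{\frac{p}{2}}\right],
\end{align*}
and Assumption \ref{assump1} is calibrated precisely so that the bracket becomes strictly positive once $c'$ is large (the factor $(1+\beta/c')^{p/2}$ tends to $1$ and the slack $k^{0}>0$ absorbs it). This gives $\sup_{\boldsymbol{\Theta}_{C}}L_{\beta}\geq 0$ with strict positivity available, against which the escape modes (all bounded above by nonpositive limits) are excluded; for $P=F_{n}$ only a pointwise SLLN at this single benchmark is needed, not the uniform law you invoke. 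Two smaller remarks: the paper does not treat $\pi_{j}\rightarrow 0$ as an escape mode to be excluded --- the weights live in the compact set $[0,1]^{k}$, and when some limits vanish the optimizer is completed by assigning arbitrary admissible values to the dead components --- and your final step (handling the discontinuity of the assignment functions via cells of constant assignment and the $F$-null tie locus) is actually more careful than the paper's, which asserts the limit is a maximizer without addressing continuity.
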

\begin{proof}

Let $\{\boldsymbol{\theta}^{r}\}=\{(\pi^{r}_{1}, \pi^{r}_{2},...., \pi^{r}_{k}, \boldsymbol{\mu}^{r}_{1}, \boldsymbol{\mu}^{r}_{2},...,\boldsymbol{\mu}^{r}_{k}, \boldsymbol{\Sigma}^{r}_{1}, \boldsymbol{\Sigma}^{r}_{2},..., \boldsymbol{\Sigma}^{r}_{k})\}$ be a sequence in $\boldsymbol{\Theta}_{C}$ such that,
\begin{equation}
\centering
\label{Eq17}
\underset{r\rightarrow\infty}{\text{lim}}L_{\beta}(\boldsymbol{\theta}^{r},P)=\underset{\boldsymbol{\theta} \in \boldsymbol{\Theta}_{C}}{\sup}L_{\beta}(\boldsymbol{\theta},P).
\end{equation}
Let us assume, without loss of generality, that, $\pi^{0}_{1}=\underset{1\leq j \leq k}{max}\;\pi^{0}_{j}$.
Let, $\boldsymbol{\theta}^{a}= (\pi^{a}_{1}, \pi^{a}_{2},...., \pi^{a}_{k}, \boldsymbol{\mu}^{a}_{1}, \boldsymbol{\mu}^{a}_{2},...,\boldsymbol{\mu}^{a}_{k},\\ \boldsymbol{\Sigma}^{a}_{1}, \boldsymbol{\Sigma}^{a}_{2},..., \boldsymbol{\Sigma}^{a}_{k}) \in \boldsymbol{\Theta}_{C}$ such that,
\begin{align*}
\pi^{a}_{j}=1\;,\;\boldsymbol{\mu}^{a}_{j}=\boldsymbol{\mu}^{0}_{j} ,\;\boldsymbol{\Sigma}^{a}_{j}=c^{'}\boldsymbol{\Sigma}^{0}_{j}\; \text{for}\;j=1.
\end{align*}
This implies that,
\[
Z_{j}(\boldsymbol{X},\boldsymbol{\theta}^{a})= 
\begin{cases}
1,& \text{if } j= 1\\
0,              & \text{if }  2 \leq j\leq k.
\end{cases}
\]
Hence,
\begin{align*}
\underset{r\rightarrow\infty}{\text{lim}}L_{\beta}(\boldsymbol{\theta}^{r},P)=\underset{\boldsymbol{\theta} \in \boldsymbol{\Theta}_{C}}{\sup}L_{\beta}(\boldsymbol{\theta},P)\geq L_{\beta}(\boldsymbol{\theta}^{a},P).
\end{align*}
If $P$ is the true unknown distribution function (i.e., $F$), then,
\begin{align*}
L_{\beta}(\boldsymbol{\theta}^{a},P)&=E_{F}\left(\frac{1}{\beta}\phi^{1+\beta}_{p}(\boldsymbol{X},\boldsymbol{\mu}^{0}_{1},c^{'}\boldsymbol{\Sigma}^{0}_{1})\right)-\frac{1}{1+\beta}\int \phi^{1+\beta}_{p}(\boldsymbol{x},\boldsymbol{\mu}^{0}_{1},c^{'}\boldsymbol{\Sigma}^{0}_{1})\;d\boldsymbol{x}\\
&\geq \pi^{0}_{1}E_{F_{1}}\left(\frac{1}{\beta}\phi^{1+\beta}_{p}(\boldsymbol{X},\boldsymbol{\mu}^{0}_{1},c^{'}\boldsymbol{\Sigma}^{0}_{1})\right)-\frac{1}{1+\beta}\int \phi^{1+\beta}_{p}(\boldsymbol{x},\boldsymbol{\mu}^{0}_{1},c^{'}\boldsymbol{\Sigma}^{0}_{1})\;d\boldsymbol{x}\\
&=\frac{1}{\beta(2\pi)^{\frac{p\beta}{2}}|c^{'}\Sigma^{0}_{1}|^{\frac{\beta}{2}}(1+\dfrac{\beta}{c^{'}})^{\frac{p}{2}}}\left[\pi^{0}_{1}-\frac{\beta}{(1+\beta)^{1+\frac{p}{2}}}\left(1+\frac{\beta}{c^{'}}\right)^\frac{p}{2}\right]\;\text{(after some algebra)}.
\end{align*}
Now, the positivity of the aforesaid term can be achieved by taking a large enough $c^{'}$ (as a consequence of Assumption \ref{assump1}), and thus 
\begin{align*}
L_{\beta}(\boldsymbol{\theta}^{a},P)\geq 0.
\end{align*}
But if $P$ is the empirical distribution $F_{n}$ ($n$ represents the sample size),
\begin{align*}
L_{\beta}(\boldsymbol{\theta}^{a},P)=\frac{1}{n\beta}\sum_{i=1}^{n}\phi^{\beta}_{p}(\boldsymbol{X}_{i},\boldsymbol{\mu^{0}_{1}},c^{'}\boldsymbol{\Sigma^{0}_{1}})-\frac{1}{1+\beta}\int \phi^{1+\beta}_{p}(\boldsymbol{x},\boldsymbol{\mu^{0}_{1}},c^{'}\boldsymbol{\Sigma^{0}_{1}})\;d\boldsymbol{x}.
\end{align*}
The positivity of the above quantity can be easily established by an application of the strong law of large numbers (SLLN) assuming a moderately large sample size $n$ followed by the aforesaid argument to prove the positivity in case of $P=F$.
So, the sequence $\{\boldsymbol{\theta}^{r}\}$ satisfies,
\begin{equation}
\centering 
\label{Eq18}
\underset{r\rightarrow\infty}{\text{lim}}L_{\beta}(\boldsymbol{\theta}^{r},P)\geq 0.
\end{equation}
Since $(\pi^{r}_{1}, \pi^{r}_{2},...., \pi^{r}_{k}) \in [0,1]^{k}$ and $[0,1]^{k}$ is a compact set in $\mathbb{R}^{k}$, the sequence $\{\boldsymbol{\theta}^{r}\}$ has a subsequence $\{\boldsymbol{\theta}^{r}\}^{l}$ such that $\{\pi^{r}_{1},\pi^{r}_{2},...,\pi^{r}_{k}\}^{l}$ is convergent. To simplify the notation, we will denote this subsequence $\{\boldsymbol{\theta}^{r}\}^{l}$  as the original sequence $\{\boldsymbol{\theta}^{r}\}$.

Hence the sequence must satisfy the following properties.
\begin{enumerate}
	\item For the proportion sequence $\{\pi^{r}_{1},\pi^{r}_{2},...,\pi^{r}_{k}\}$,
	\begin{align}
	\label{Eq19}
	\pi^{r}_{j}&\rightarrow\pi_{j} \in [0,1]\; \text{for}\;1\leq j\leq k.
	\end{align}
	\item For the mean sequence $\{\boldsymbol{\mu}^{r}_{1},\boldsymbol{\mu}^{r}_{2},...,\boldsymbol{\mu}^{r}_{k}\}$,
	\begin{align}
	\label{Eq20}
	\boldsymbol{\mu}^{r}_{j}&\rightarrow\boldsymbol{\mu}_{j} \in \mathbb{R}^{p}\; \text{for}\; j=1,2,...,g\;\text{and} \;||\boldsymbol{\mu}^{r}_{j}||\rightarrow \infty\; \text{for}\;j=g+1,...,k\;\text{for}\;\text{some}\; 0 \leq g\leq k.
	\end{align}
	\item Finally, the dispersion sequence $\{\boldsymbol{\Sigma}^{r}_{1},\boldsymbol{\Sigma}^{r}_{2},...,\boldsymbol{\Sigma}^{r}_{k}\}$ must satisfy exactly one of the following conditions.
	Either,
	\begin{align}
	\label{Eq21}
	\boldsymbol{\Sigma}^{r}_{j}&\rightarrow\boldsymbol{\Sigma}_{j}\in \mathbb{R}^{p \times p} \; \text{for}\;1\leq j\leq k,
	\end{align}
	or,
	\begin{align}
	\label{Eq22}
	M_{r} \rightarrow \infty,
	\end{align}
	or,
	\begin{align}
	\label{Eq23}
	m_{r} \rightarrow 0,
	\end{align}
	where $M_r$ and $m_r$ are the largest and the smallest elements of the set of eigenvalues of $\boldsymbol{\Sigma}^{r}_{1},\boldsymbol{\Sigma}^{r}_{2},...,\boldsymbol{\Sigma}^{r}_{k}$, respectively.
\end{enumerate}
Now, by Lemma \ref{baz1}, presented in Appendix \ref{appenA}, Equation (\ref{Eq20}) holds for $g=k$ in case of component means and Equation (\ref{Eq21}) holds for the component covariance matrices. Thus, if $\pi_j >0$ for all $j=1, 2, \ldots, k$ in Equation (\ref{Eq19}), then, the choice of the optimizer is obvious. But, if $\pi_{j}>0$ for $j=1,2,\ldots,g$ for some $1\leq g <k$, and $\pi_{j}=0$ for $j>g$, then take $\pi_{j}=\underset{r\rightarrow \infty}{lim}\; \pi^{r}_{j}$ for $j=1,2,\ldots,g$ and $\pi_{j}=0$ for $j>g$, $\mu_{j}=\underset{r\rightarrow \infty}{lim}\; \mu^{r}_{j}$, $\Sigma_{j}=\underset{r\rightarrow \infty}{lim}\; \Sigma^{r}_{j}$ for $j=1,2,\ldots,g$ and $\mu_{j}$ and $\Sigma_{j}$ arbitrarily (satisfying the ER and NS constraints) for $j>g$. These values will provide the maximizer of the objective function in consideration. This completes the proof of Theorem \ref{THM:Existence}.
\end{proof}

Our next theorem provides consistency properties of the MPLE$_{\beta}$ (that is, the consistency of the sample version (optimizer in Equation (\ref{Eq4})) to the population version (optimizer in Equation (\ref{Eq5}))). To achieve this, we need uniqueness of the maximizer of (\ref{Eq5}) under the ER and NS constraints and the following prerequisite result in order to establish the consistency.

\begin{theorem}[Corollary $3.2.3$, van der Vaart and Wellner $(1996)$ \cite{vandervaart}]\label{baz2}
	Let $M_{n}$ be a stochastic process indexed by a metric space $\boldsymbol{\Theta}$ and let $M:\boldsymbol{\Theta}\rightarrow\mathbb{R}$ be a deterministic function. Suppose the following conditions hold.
	\begin{enumerate}
		\item Suppose that $||M_{n}-M||_{\boldsymbol{\Theta}}\rightarrow 0$ in probability.
		\item There exists a point $\boldsymbol{\theta}_{0}$ such that, $M(\boldsymbol{\theta}_{0})>\underset{\boldsymbol{\theta} \notin G}{\sup}M(\boldsymbol{\theta})$ for every open set $G$ that contains $\boldsymbol{\theta}_{0}$.
		\item Suppose a sequence $\hat{\boldsymbol{\theta}}_{n}$ satisfies $M_{n}(\hat{\boldsymbol{\theta}}_{n})>\underset{\boldsymbol{\theta}}{\sup}\,M_{n}(\boldsymbol{\theta})-o_{p}(1)$.
	\end{enumerate}
	Then, $\hat{\boldsymbol{\theta}}_{n}\rightarrow\boldsymbol{\theta}_{0}$ in probability.
\end{theorem}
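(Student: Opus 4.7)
The plan is to convert the well-separated maximum condition on $\boldsymbol{\theta}_0$ (condition 2) into a neighborhood statement about $\hat{\boldsymbol{\theta}}_n$, using uniform closeness of $M_n$ to $M$ (condition 1) and near-maximization of $M_n$ by $\hat{\boldsymbol{\theta}}_n$ (condition 3) as the bridge. This is the textbook argmax-style consistency argument from empirical process theory, carried out on a single high-probability event.

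First I would fix an arbitrary open neighborhood $G$ of $\boldsymbol{\theta}_0$ and define
\[
\eta \;:=\; M(\boldsymbol{\theta}_0) - \sup_{\boldsymbol{\theta} \notin G} M(\boldsymbol{\theta}),
\]
which is strictly positive by condition 2. Since $P(\hat{\boldsymbol{\theta}}_n \in G) \to 1$ for every such $G$ is, by definition, convergence in probability of $\hat{\boldsymbol{\theta}}_n$ to $\boldsymbol{\theta}_0$ in the metric space $\boldsymbol{\Theta}$, it suffices to prove that statement for each fixed $G$.

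Next I would work on the high-probability event
\[
A_n \;:=\; \bigl\{\|M_n - M\|_{\boldsymbol{\Theta}} < \eta/4\bigr\} \cap \bigl\{\sup_{\boldsymbol{\theta}} M_n(\boldsymbol{\theta}) - M_n(\hat{\boldsymbol{\theta}}_n) < \eta/4\bigr\},
\]
whose probability tends to one by conditions 1 and 3. On $A_n$, I would chain three strict inequalities,
\begin{align*}
M(\hat{\boldsymbol{\theta}}_n) \;>\; M_n(\hat{\boldsymbol{\theta}}_n) - \eta/4 \;\geq\; M_n(\boldsymbol{\theta}_0) - \eta/2 \;>\; M(\boldsymbol{\theta}_0) - 3\eta/4,
\end{align*}
to obtain $M(\hat{\boldsymbol{\theta}}_n) > M(\boldsymbol{\theta}_0) - \eta$. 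By the very definition of $\eta$ this inequality is incompatible with $\hat{\boldsymbol{\theta}}_n \notin G$, so $A_n \subseteq \{\hat{\boldsymbol{\theta}}_n \in G\}$ and the conclusion follows.

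The real conceptual ingredient is the well-separation hypothesis 2: were $M$ allowed to have a flat plateau at its maximum or a second near-optimum outside $G$, no amount of uniform convergence of $M_n$ could pin $\hat{\boldsymbol{\theta}}_n$ down to a neighborhood of $\boldsymbol{\theta}_0$. Once the strict gap $\eta > 0$ is supplied by that hypothesis, the remainder is merely the bookkeeping of slack constants; apportioning $\eta$ into four equal pieces (one for each of the two uses of uniform closeness, one for the near-maximization slack, one as buffer) is the cleanest way to make the chain of strict inequalities close.
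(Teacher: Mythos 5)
Your argument is correct and complete: the paper itself gives no proof of this statement (it is imported verbatim as Corollary 3.2.3 of van der Vaart and Wellner), and your chain of inequalities on the event $A_n$ is precisely the standard argmax-consistency argument given in that reference. The only caveat worth noting is that van der Vaart and Wellner state the result with outer probability to sidestep measurability of $\|M_n - M\|_{\boldsymbol{\Theta}}$ and of $\{\hat{\boldsymbol{\theta}}_n \in G\}$; your proof goes through unchanged once each probability is read as an outer probability.
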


\begin{theorem}[Consistency]\label{THM:Consistency}
Suppose that $\boldsymbol{\theta}_{0}$ be the unique maximizer of (\ref{Eq5}) subject to the ER and NS constraints and let Assumption \ref{assump1} holds. 
Then, if $\hat{\boldsymbol{\theta}}_{n}$ is a maximizer of (\ref{Eq4}) based on a sample of size $n$, we have
$\hat{\boldsymbol{\theta}}_{n}\rightarrow\boldsymbol{\theta}_{0}$ in probability as $n \rightarrow \infty$.
\end{theorem}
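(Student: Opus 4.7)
The plan is to verify the three hypotheses of Theorem \ref{baz2} with $M_n(\boldsymbol{\theta}) := L_\beta(\boldsymbol{\theta}, F_n)$ and $M(\boldsymbol{\theta}) := L_\beta(\boldsymbol{\theta}, F)$, both regarded as functions on the restricted parameter space $\boldsymbol{\Theta}_C$. Condition $(3)$ is essentially free: by Theorem \ref{THM:Existence} an exact maximizer $\hat{\boldsymbol{\theta}}_n$ of $M_n$ exists in $\boldsymbol{\Theta}_C$ for all sufficiently large $n$, so no $o_p(1)$ slack is needed.

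For Condition $(2)$ (well-separation of the maximum at $\boldsymbol{\theta}_0$), I would combine the assumed uniqueness of $\boldsymbol{\theta}_0$ with a compactification-at-infinity argument in the spirit of the existence proof. As in Lemma \ref{baz1}, whenever a sequence $\boldsymbol{\theta}^r \in \boldsymbol{\Theta}_C$ drifts toward the boundary (some $\|\boldsymbol{\mu}_j^r\|\to\infty$ or some eigenvalue of $\boldsymbol{\Sigma}_j^r\to\infty$, the case of an eigenvalue going to $0$ being ruled out by the NS constraint), the $\beta$-power term $\phi_p^\beta(\cdot,\boldsymbol{\mu}_j^r,\boldsymbol{\Sigma}_j^r)$ degenerates, so that $M(\boldsymbol{\theta}^r)$ is eventually strictly smaller than $M(\boldsymbol{\theta}_0)$. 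Thus for every open neighborhood $G$ of $\boldsymbol{\theta}_0$ the supremum of $M$ over $\boldsymbol{\Theta}_C\setminus G$ is effectively attained on a compact subset, where continuity of $M$ together with uniqueness of $\boldsymbol{\theta}_0$ yield the required strict inequality $M(\boldsymbol{\theta}_0)>\sup_{\boldsymbol{\theta}\notin G}M(\boldsymbol{\theta})$.

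The substance of the proof lies in Condition $(1)$, the uniform convergence $\sup_{\boldsymbol{\theta}\in\boldsymbol{\Theta}_C}|M_n(\boldsymbol{\theta})-M(\boldsymbol{\theta})|\to 0$ in probability. Writing $L_\beta(\boldsymbol{\theta},P) = \int h_{\boldsymbol{\theta}}\,dP$ with
\begin{align*}
h_{\boldsymbol{\theta}}(\boldsymbol{x}) = \sum_{j=1}^{k} Z_j(\boldsymbol{x},\boldsymbol{\theta})\left[\log\pi_j + \frac{1}{\beta}\phi_p^\beta(\boldsymbol{x},\boldsymbol{\mu}_j,\boldsymbol{\Sigma}_j) - \frac{1}{1+\beta}\int\phi_p^{1+\beta}(\boldsymbol{y},\boldsymbol{\mu}_j,\boldsymbol{\Sigma}_j)\,d\boldsymbol{y}\right],
\end{align*}
it suffices to prove that the class $\mathcal{H}=\{h_{\boldsymbol{\theta}}:\boldsymbol{\theta}\in\boldsymbol{\Theta}_C\}$ is Glivenko-Cantelli (GC) with respect to $F$. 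I would decompose each summand into the indicator factor $Z_j$ and a smooth parametric factor and handle them separately. Taking logarithms in the defining inequalities $\pi_j\phi_p(\boldsymbol{x},\boldsymbol{\mu}_j,\boldsymbol{\Sigma}_j) \geq \pi_\ell\phi_p(\boldsymbol{x},\boldsymbol{\mu}_\ell,\boldsymbol{\Sigma}_\ell)$ shows that each assignment region $\{Z_j=1\}$ is cut out by finitely many quadratic inequalities in $\boldsymbol{x}$, so that the family of such sets has polynomial discrimination and lies in a Vapnik-$\breve{C}$ervonenkis class; the corresponding indicators therefore form a bounded GC class. Under the ER/NS constraints the smooth factor is also uniformly bounded (since $|\boldsymbol{\Sigma}_j|^{-\beta/2}\leq c_1^{-p\beta/2}$ caps both $\phi_p^\beta$ and the integral term) and continuous in $\boldsymbol{\theta}$, delivering the GC property on the smooth side. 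Standard permanence results for products and finite sums of bounded GC classes then give the GC property for $\mathcal{H}$.

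The principal obstacle is the non-compactness of $\boldsymbol{\Theta}_C$: proportions can collapse toward zero, means can diverge, and (even under the ER constraint) the largest eigenvalue can blow up. To handle this, I would truncate $\boldsymbol{\Theta}_C$ to a compact subfamily $\boldsymbol{\Theta}_C^{N}$ on which the VC/GC argument above applies directly, and construct a uniform $F$-integrable envelope that makes the tail contribution of $\mathcal{H}\setminus\mathcal{H}^{N}$ arbitrarily small. The tail control mirrors the estimates in the existence proof: as $\|\boldsymbol{\mu}_j\|\to\infty$ both $\phi_p^\beta(\boldsymbol{x},\boldsymbol{\mu}_j,\boldsymbol{\Sigma}_j)$ and the corresponding integral vanish uniformly in $\boldsymbol{x}$ on compacta; as the largest eigenvalue grows $|\boldsymbol{\Sigma}_j|^{-\beta/2}\to 0$; and as some $\pi_j\to 0$ the set $\{Z_j=1\}$ shrinks so that $Z_j\log\pi_j$ is negligible in expectation. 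Combining the GC property on $\boldsymbol{\Theta}_C^N$ with this uniform tail envelope yields Condition $(1)$, and Theorem \ref{baz2} then delivers $\hat{\boldsymbol{\theta}}_n\to\boldsymbol{\theta}_0$ in probability.
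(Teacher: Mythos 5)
Your proposal follows the same overall strategy as the paper's proof: both verify the three hypotheses of Theorem \ref{baz2} with $M_n=L_\beta(\cdot,F_n)$ and $M=L_\beta(\cdot,F)$, dispose of condition (3) via Theorem \ref{THM:Existence} and of condition (2) via the assumed uniqueness of $\boldsymbol{\theta}_0$, and reduce condition (1) to a Glivenko--Cantelli property obtained from Vapnik-$\breve{C}$ervonenkis arguments (the quadratic forms span a finite-dimensional vector space, hence are VC; the assignment regions are cut out by finitely many quadratic inequalities, hence are VC; permanence of GC under continuous combinations with an integrable envelope). The one genuine divergence is how non-compactness of $\boldsymbol{\Theta}_C$ is tamed. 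The paper localizes the \emph{estimators}: Lemma \ref{boundedness} shows that $\hat{\boldsymbol{\theta}}_n$ eventually lies in a fixed compact $K\subset\boldsymbol{\Theta}_C$ almost surely, so the GC property need only be checked over $K$, where the envelope is easily integrable. You instead keep the full index set and localize the \emph{function class}, truncating to a compact subfamily and arguing that the tail contribution is uniformly negligible. Both routes work, but yours needs a bit more care than the sketch admits: $\phi_p^\beta(\boldsymbol{x},\boldsymbol{\mu}_j,\boldsymbol{\Sigma}_j)$ does not vanish uniformly in $\boldsymbol{x}\in\mathbb{R}^p$ as $\|\boldsymbol{\mu}_j\|\to\infty$ (it equals $((2\pi)^p|\boldsymbol{\Sigma}_j|)^{-\beta/2}$ at $\boldsymbol{x}=\boldsymbol{\mu}_j$), so the tail bound must be run through $F_n$ and $F$ (a law-of-large-numbers control on the fraction of sample points with large norm) rather than pointwise in $\boldsymbol{x}$; likewise the claim that $E_F[Z_j|\log\pi_j|]$ is negligible as $\pi_j\to 0$ requires the Gaussian tail computation showing $F(Z_j=1)$ decays faster than $|\log\pi_j|$ grows. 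On the other side of the ledger, you are more careful than the paper on condition (2): uniqueness of the maximizer alone does not yield well-separation on a non-compact parameter space, and your compactification-at-infinity argument (sequences drifting to the boundary have $\limsup M\le 0<M(\boldsymbol{\theta}_0)$, so the supremum off any neighborhood of $\boldsymbol{\theta}_0$ is effectively taken over a compact set on which continuity and uniqueness apply) supplies a justification the paper leaves implicit.
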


\begin{proof}

In order to derive the estimators, we had maximized the objective function $L_{\beta}(\boldsymbol{\theta}, F_{n})$ which is not differentiable with respect to the parameter $\boldsymbol{\theta}$. Hence, the standard Taylor series expansion approach (used to derive the asymptotics of maximum likelihood estimators) may not work for this problem. Thus, we are going to use the modern empirical process tricks (Theorem \ref{baz2}) to establish weak consistency.

Following the notations of Theorem \ref{baz2}, we have, $M_{n}(\boldsymbol{\theta})=L_{\beta}(\boldsymbol{\theta},F_n)=E_{F_{n}}(m_{\boldsymbol{\theta}}(\boldsymbol{X})) \;\text{and}\; M(\boldsymbol{\theta})=E_{F}(m_{\boldsymbol{\theta}}(\boldsymbol{X})$ with 
\begin{align*}
 m_{\boldsymbol{\theta}}(\boldsymbol{X})=\sum_{j=1}^{k}Z_{j}(\boldsymbol{X},\boldsymbol{\theta})\left[\log\;\pi_{j}+\frac{1}{\beta}\phi_{p}^{\beta}(\boldsymbol{X},\boldsymbol{\mu}_{j},\boldsymbol{\Sigma}_{j})-\frac{1}{1+\beta}\int \phi_{p}^{1+\beta}(\boldsymbol{x},\boldsymbol{\mu}_{j},\boldsymbol{\Sigma}_{j}) \;d\boldsymbol{x}\right].
\end{align*}
 The third condition of Theorem \ref{baz2} is satisfied due to Theorem \ref{THM:Existence}. The second condition of Theorem \ref{baz2} is also satisfied due to the assumption of the existence of a unique maximizer of the theoretical objective function in Equation (\ref{Eq5}). Thus, we have to check the first condition only. To verify this, we need a Glivenko-Cantelli (GC) property (van der Vaart and Wellner $(1996)$ \cite{vandervaart}) of the class $\mathcal{F}=\{m_{\boldsymbol{\theta}}(\boldsymbol{X}):\boldsymbol{\theta} \in \boldsymbol{\Theta}_{C}\}.$

To do that, first let us observe the fact (van der Vaart and Wellner $(1996)$ \cite{vandervaart}) that any appropriately measurable Vapnik-$\breve{C}$ervonenkis(VC) class is Glivenko-Cantelli(GC) provided its envelope function is integrable.
Hence, it is enough to show that, $\mathcal{F}$ is VC. But to conclude that $\mathcal{F}$ is GC, the integrability of the envelope function is very crucial. To achieve this integrability, we need the compactness of the parameter space which is established by Lemma \ref{boundedness} in Appendix \ref{appenA}, in the almost sure sense. The significance of Lemma \ref{boundedness} is that the estimators are almost surely included in a compact subset $K$ of the actual parameter space $\boldsymbol{\Theta}_{C}$ for sufficiently large sample sizes. Hence, it is enough to focus on the compact subset $K$ instead of the entire parameter space $\boldsymbol{\Theta}_{C}$ which is unbounded.

To establish that $\mathcal{F}$ is GC, we will follow the methodologies developed in Section $2.6$ of van der Vaart and Wellner $(1996)$ \cite{vandervaart} and Kosorok $(2008)$ \cite{kosorok}.
Let us observe the following facts under the assumption $\boldsymbol{\theta} \in K$.
\begin{itemize}
	\item The functions $(\boldsymbol{X}-\boldsymbol{\mu}_{j})^{'}\boldsymbol{\Sigma}^{-1}_{j}(\boldsymbol{X}-\boldsymbol{\mu}_{j})$ are polynomials of degree 2. Hence, these functions together form a finite dimensional vector space and hence is VC.
	\item The function $\phi(x)=e^{-x}$ is monotone and continuous hence $\phi \circ \mathcal{G}$ is VC if  $\mathcal{G}$ is VC.
	\item The sets $\{Z_{j}(\boldsymbol{X},\boldsymbol{\theta})=1\}$ can be obtained through polynomials of degree 2 and hence is VC. Thus, the functions $\{Z_{j}(\boldsymbol{X},\boldsymbol{\theta})\}$ as indicators of the sets  $\{Z_{j}(\boldsymbol{X},\boldsymbol{\theta})=1\}$ are VC.
	\item Suppose $\mathcal{F}_{1}, \mathcal{F}_{2},...,\mathcal{F}_{k}$ be GC classes of functions on the probability measure $P$ and $\phi$ is a continuous function from $\mathbb{R}^{k}$ to $\mathbb{R}$. Then $\mathcal{H}=\phi(\mathcal{F}_{1}, \mathcal{F}_{2},...,\mathcal{F}_{k})$ is GC on the probability measure $P$ provided that $\mathcal{H}$ has an integrable envelop function (Kosorok $(2006)$ \cite{kosorok}, Wellner $(2012)$ \cite{wellner})).
\end{itemize}
The first observation implies that the collection of functions $(\boldsymbol{X}-\boldsymbol{\mu}_{j})^{'}\boldsymbol{\Sigma}^{-1}_{j}(\boldsymbol{X}-\boldsymbol{\mu}_{j})$ is VC. The second observation implies that $\{\phi_{p}^{\beta}(\boldsymbol{x},\boldsymbol{\mu}_{j},\boldsymbol{\Sigma}_{j})\}$ is VC with an integrable envelope function (because of the compactness of $K$ and boundedness of $e^{-x}$) and hence is GC. Similarly,  the third observation implies that the collection of functions $\{Z_{j}(\boldsymbol{X},\boldsymbol{\theta})\}$ is VC and hence is GC. 

Now the fact that $\mathcal{F}$ is GC is easily followed by the fourth observation and the compactness of $K$. This completes the proof of Theorem \ref{THM:Consistency}.

\end{proof}
\section{Real Data Example}
\label{SLC}
In this section, we are going to illustrate our method using two real life datasets; one of them involves univariate data while the other is a multivariate example.  
\subsection{Univariate Case}
Here we apply our method on the Red blood cell sodium-lithium countertransport (SLC) dataset which has been analyzed in the past by several authors including Dudley et al. $(1991)$ \cite{slc91}, Roeder $(1994)$ \cite{slc94} and Fujisawa and Eguchi $(2006)$ \cite{dpdjapan}. Geneticists are concerned about SLC as it may correlate
blood pressure and hence may be a potential factor behind
hypertension. SLC is also less complicated to assess than blood pressure,
because the latter is a complex quantitative trait influenced
by environmental and genetic factors. The sample size is $190$ and the dataset consists of $3$ genotypes, namely, $A_{1}A_{1}$, $A_{1}A_{2}$, $A_{2}A_{2}$. We analyze these data using the ordinary maximum likelihood approach, the maximum  pseudo $\beta$-likelihood approach, as well as the approach of Fujisawa and Eguchi \cite{dpdjapan}. Figure \ref{table 1} presents the histogram of the original data with different fits overlaid, including the maximum likelihood fit with $3$ clusters, the Fujisawa-Eguchi (FE) fit with $\beta = 0.45$ and $3$ clusters, the maximum pseudo $\beta$-likelihood fit with $\beta = 0.5$ and $3$ clusters and the same with $4$ clusters. The component parameter estimates of the aforesaid models are presented in Table \ref{table 3}.

\begin{figure}[h!]
	\centering
	\begin{tabular}{cc}
		\begin{subfigure}{0.4\textwidth}\centering\includegraphics[width=1\columnwidth]{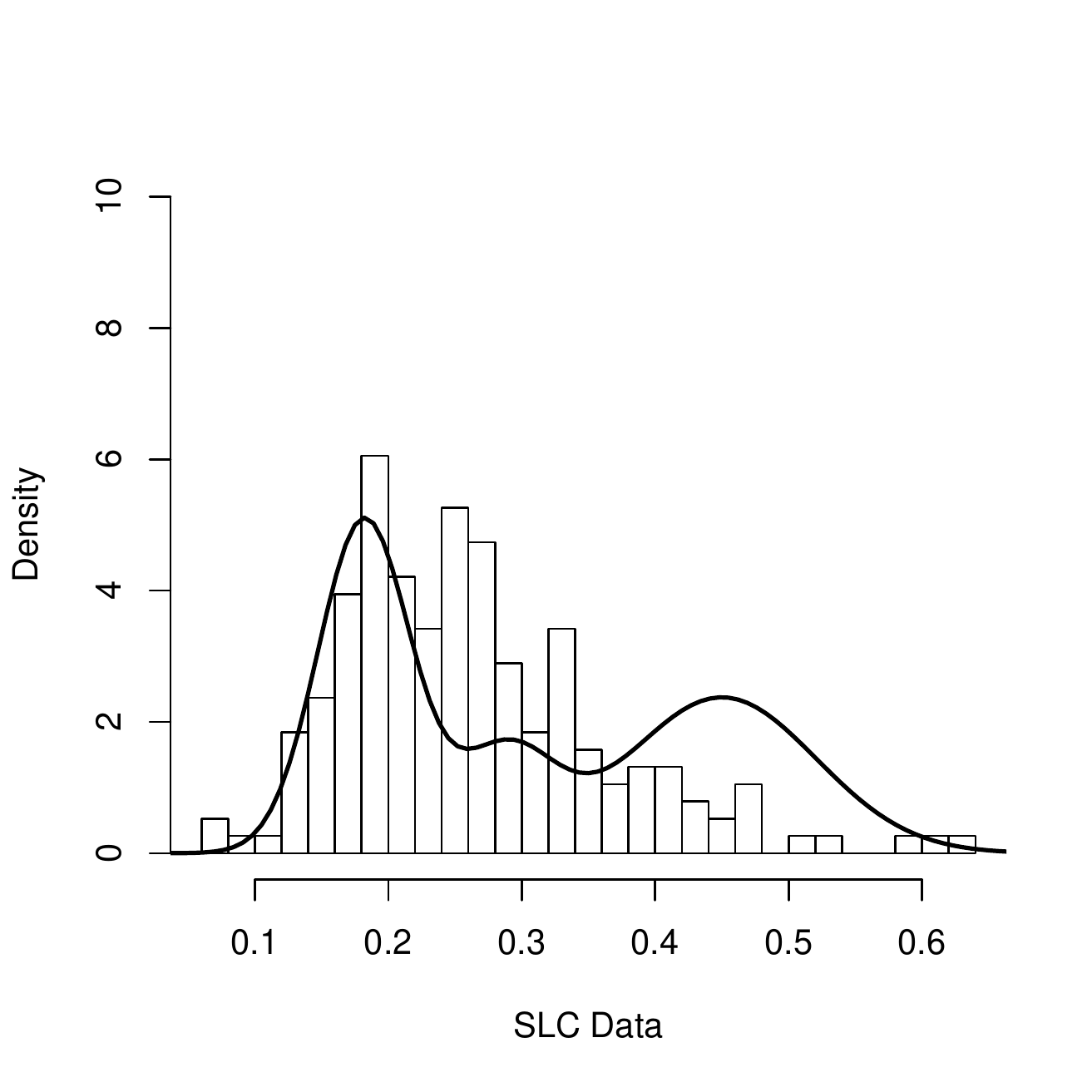}\caption{MLE based fit using $3$ clusters.}\end{subfigure}&
			\begin{subfigure}{0.4\textwidth}\centering\includegraphics[width=1\columnwidth]{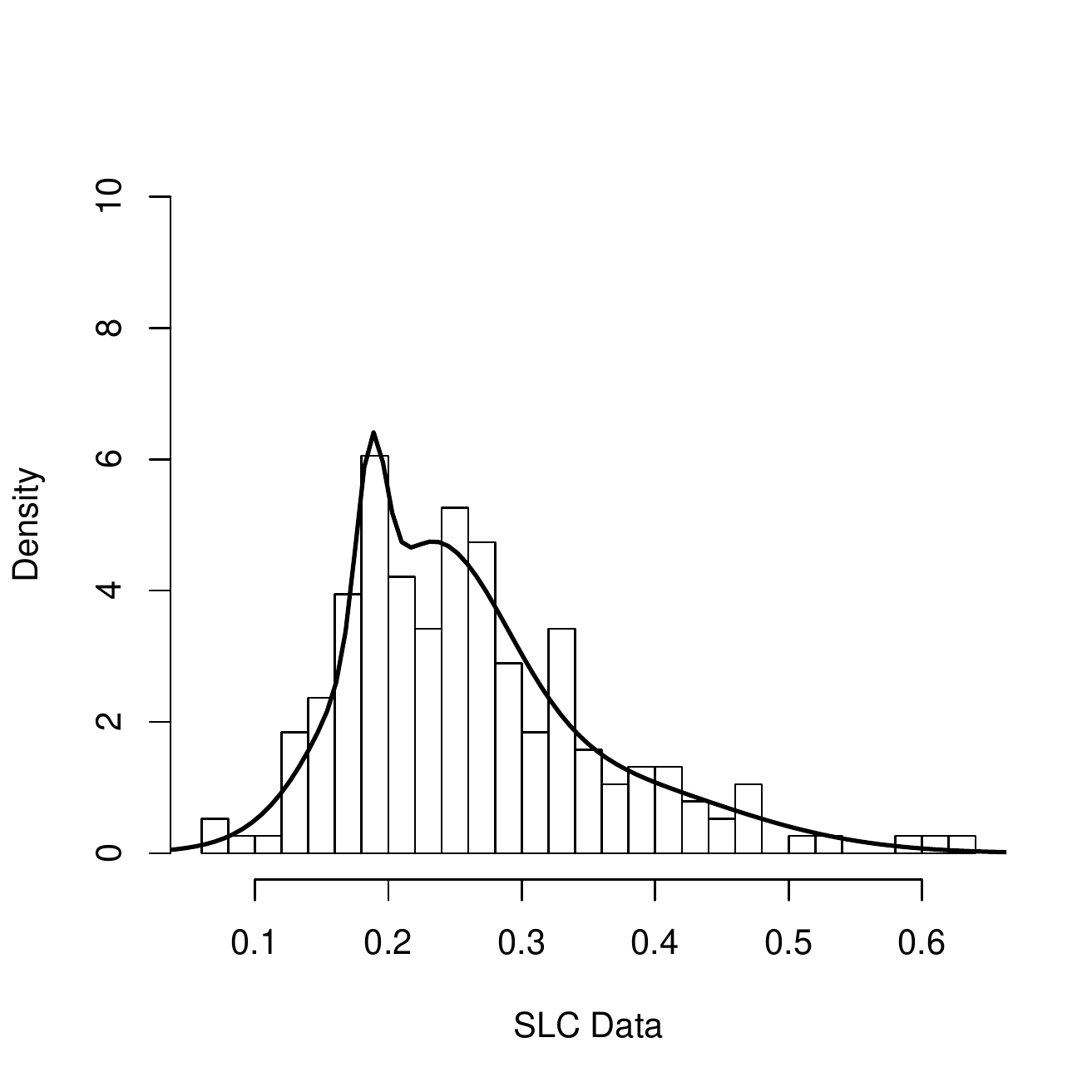}\caption{FE fit using $3$ clusters with $\beta=0.45$.}\end{subfigure}\\
		\begin{subfigure}{0.4\textwidth}\centering\includegraphics[width=1\columnwidth]{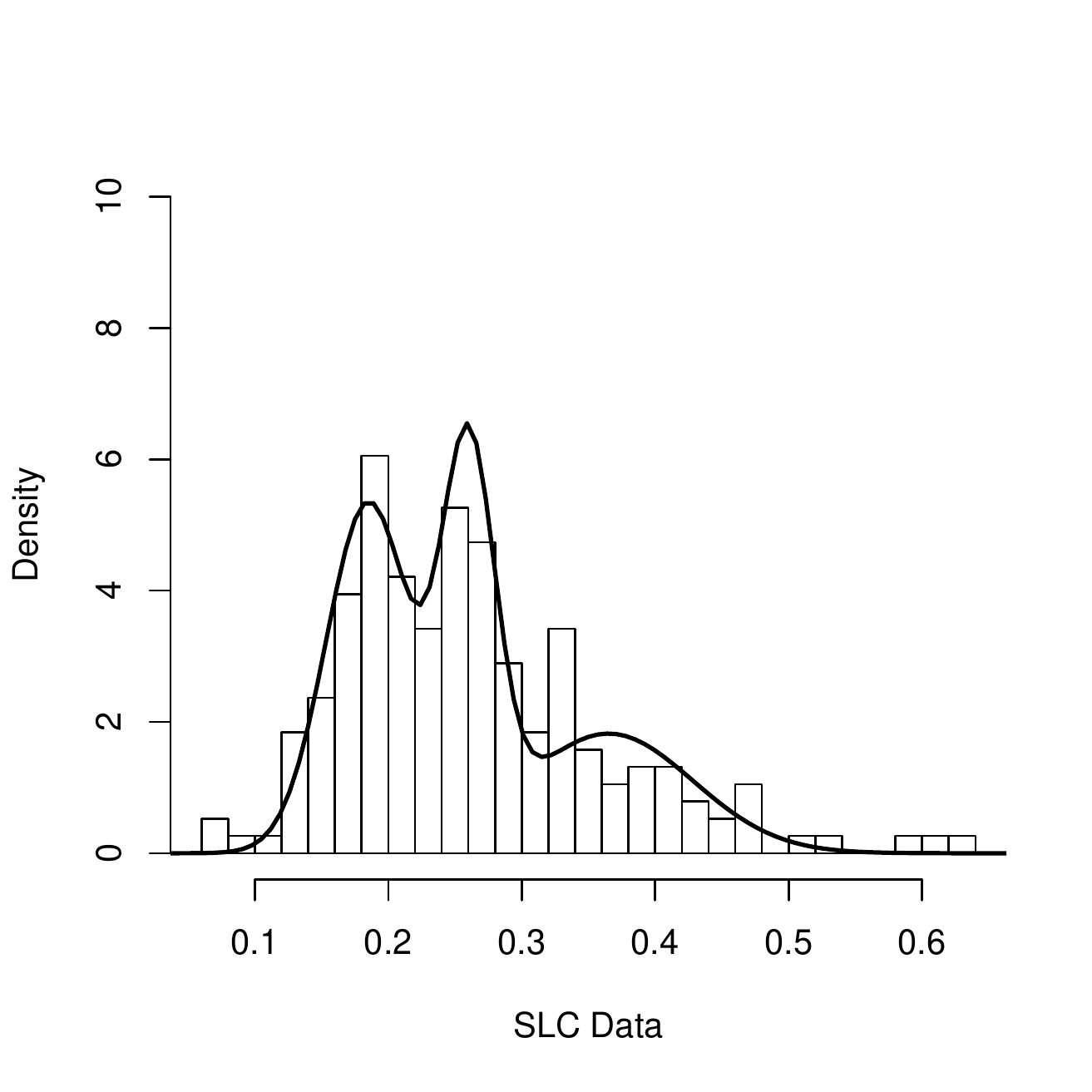}\caption{MPLE$_{\beta}$ fit using $3$ clusters with $\beta=0.5$.}\end{subfigure}&
		\begin{subfigure}{0.4\textwidth}\centering\includegraphics[width=1\columnwidth]{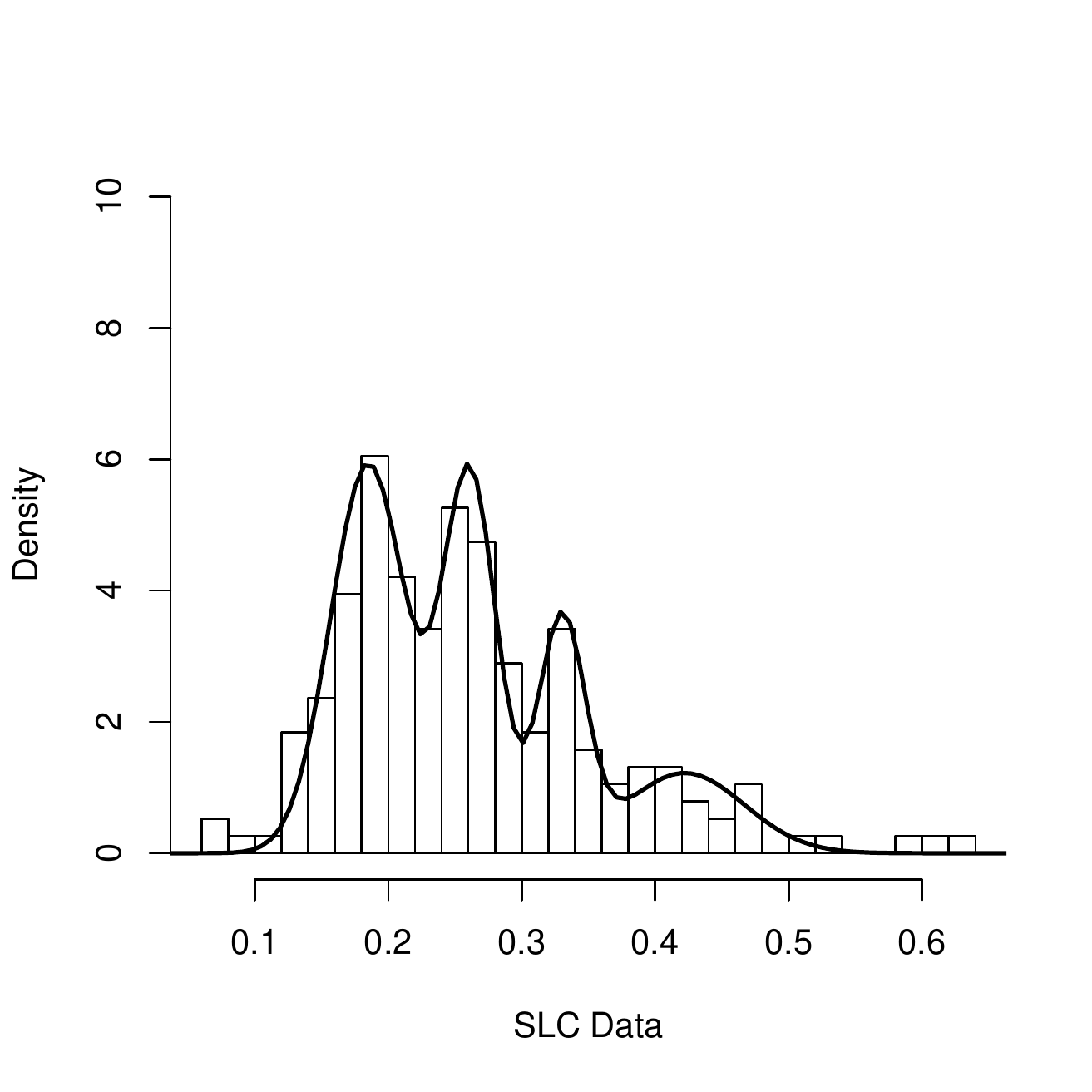}\caption{MPLE$_{\beta}$ fit using $4$ clusters with $\beta=0.5$.}\label{table1.4}\end{subfigure}\\
	\end{tabular}
	\caption{Fitted densities using different methods.}
	\label{table 1}
\end{figure}

\begin{table}[!h]
	\hspace{-2cm}
	\small 
	\begin{tabular}{c c c c c c c c c c c c c c c c c} 
		\hline\\
		\\
		Methods& Clusters & $\hat{\omega}_{1}$ & $\hat{\omega}_{2}$ & $\hat{\omega}_{3}$ & $\hat{\omega}_{4}$  & $\hat{\mu}_{1}$ & $\hat{\mu}_{2}$ & $\hat{\mu}_{3}$  & $\hat{\mu}_{4}$ & $\hat{\sigma}_{1}^{2}$ & $\hat{\sigma}_{2}^{2}$ & $\hat{\sigma}_{3}^{2}$ & $\hat{\sigma}_{4}^{2}$  \\ [1ex] 
		\hline
		MLE & 3 & 0.442 & 0.137 & 0.421 & - & 0.182 & 0.288 & 0.450 &- & 0.001 & 0.001 & 0.005 &- \\ 
	FE $(\beta=0.45)$ & 3 & 0.076 & 0.584 & 0.340 & - & 0.187 & 0.227 & 0.336 & - & 0.0001 & 0.004 & 0.012 & - \\
		MPLE$_{\beta}$ $(\beta=0.5)$ & 3 &  0.422 & 0.289 & 0.289 & - &       0.185 & 0.260 & 0.365 & - & 0.001 & 0.0004 & 0.004 & - \\
		MPLE$_{\beta}$ $(\beta=0.5)$ &4 & 0.421 & 0.289 & 0.153 & 0.137 &      0.185 & 0.261 & 0.330 & 0.422 &  0.0008 & 0.0004 & 0.0003 & 0.002 \\

		\hline 
	\end{tabular}
	\caption{Component parameter estimates for the SLC data. }
	\label{table 3}
\end{table}

The SLC dataset was originally composed of three clusters (representing the three genotypes). The histogram of the data shows three possible modes for the three probable clusters. However, it is observed that although the first two clusters (around the first two modes in the histogram) are approximately symmetric and bell-shaped in nature, the third cluster appears to significantly deviate from symmetry with a very long right tail. This leads to the discovery of only one significant real mode by the method of maximum likelihood, together with an almost invisible (and incorrect) second mode, and an entirely inaccurate third mode which is pushed way to the right to accommodate some very large observations on the right tail. The FE method possibly identifies the second mode, but it is far too tentative and diffused, perhaps due to its closeness with the first mode. The third mode is not at all discernible in the figure in this case, a consequence of the large estimated variance for the third component. The FE solution also appears to be substantially affected by the very large observations on the right tail.  The MPLE$_\beta$ provides a much more improved fit compared to the previous two. The first two modes are very accurately determined with suitable separation. The estimated third mode does not fully match the observed third mode, possibly because of the skewed pattern in the third cluster. However, unlike previous two fits, this fit clearly discounts the effect of the very large outliers to the right.    Observing the skewed third cluster in the data which is representing a model misspecification, a $4$-component normal mixture model has also been fitted using the MPLE$_{\beta}$ method (with $\beta = 0.5$) in order to assess whether this model can improve the fit. The three modes are now successfully and accurately recognized by the MPLE$_{\beta}$ method with the fourth fitted cluster pooling the skewed and misspecified part in the overall data. In this case also the large outliers are clearly discounted. In an overall sense, it can be concluded that the MPLE$_{\beta}$ method (with both $3$ and $4$ component normal mixture models) has provided substantially improved fits to these data compared to the the maximum likelihood and FE methods.

\subsection{Multivariate Case}
We now describe the utility of our method is going to be described by applying it on a multivariate dataset, namely, the Thyroid Gland Data. This dataset is among one of several databases in the Thyroid Disease Dataset 
of the UCI Machine Learning Repository (available at this \href{https://archive.ics.uci.edu/ml/datasets/thyroid+disease}{web address} and the R package $\sf{mclust}$ \cite{mclust}). The data provide information on $215$ patients about the laboratory test outcomes of five medical attributes. These attributes are \textit{(i)} $T3$-resin uptake test (in percentage, RT3U), \textit{(ii)} Total Serum thyroxin as measured by the isotopic displacement method (T4), \textit{(iii)} Total serum triiodothyronine as measured by radioimmuno assay (T3), \textit{(iv)} Basal thyroid-stimulating hormone (TSH) as measured by radioimmuno assay (TSH) and \textit{(v)} Maximal absolute difference of TSH value after injection of 200 micro grams of thyrotropin releasing hormone as compared to the basal value (DTSH). 
\begin{table}[!h]
	\centering 
	\small 
	\begin{tabular}{c c c c c c c c  c c c c c c c c c} 
		\hline
		Component&       \multicolumn{2}{c}{MLE} &  \multicolumn{2}{c}{MPLE$_{\beta}$} \\ [1ex] 
		\hline
		Means & Original & Contaminated & Original & Contaminated \\
$\hat{\mu}_1$	&	93.194	&	93.190	&	95.781	&	95.786	\\
	&	17.019	&	17.019	&	15.993	&	15.993	\\
	&	4.161	&	4.167	&	3.654	&	3.658	\\
	&	0.975	&	0.982	&	0.953	&	0.953	\\
	&	-0.047	&	-0.047	&	-0.049	&	-0.044	\\
	&		&		&		&		\\
$\hat{\mu}_2$	&	110.908	&	111	&	110.414	&	110.411	\\
	&	9.156	&	9.132	&	9.006	&	9.007	\\
	&	1.725	&	1.724	&	1.69	&	1.69	\\
	&	1.324	&	1.327	&	1.246	&	1.244	\\
	&	2.582	&	2.711	&	2.388	&	2.383	\\
	&		&		&		&		\\
$\hat{\mu}_3$	&	124.577	&	134.039	&	126.936	&	124.291	\\
	&	3.635	&	12.696	&	2.925	&	3.833	\\
	&	1.031	&	6.418	&	0.92	&	1.047	\\
	&	14.677	&	30.864	&	12.608	&	10.697	\\
	&	19.596	&	33.601	&	19.173	&	17.727	\\

		\hline

		\hline 
	\end{tabular}
	\caption{Component mean estimates for Thyroid Gland Data (MPLE$_{\beta}$ method with $\beta=0.3$). }
	\label{table 2}
	
\end{table}

The data also reveal the actual thyroidal state of these $215$ patients, i.e., whether they are suffering from euthyroidism (normal thyroid gland function),
hypothyroidism (underactive thyroid not producing enough thyroid hormone) or hyperthyroidism
(overactive thyroid producing and secreting excessive amounts of the free thyroid hormones T3
and/or thyroxine T4). Thus, we can fit a $3$-component normal mixture model (with $5$-dimensions) using the MPLE$_{\beta}$ method. Here the original cluster sizes are not equal and two of the original cluster sizes are only $30$ and $35$ while the data dimension is $5$. For this relatively larger $\frac{n}{p}$ value, the DPD estimates for each clusters need a stable starting value. We thus used S-estimates of location and scale (Lopuha$\ddot{a}$ $(1989)$ \cite{lopuhaa89}) in this regard. An exploratory analysis of these data appear to indicate that there are no major outliers in the dataset. This is also suggested by the similarity of the component mean estimates in case of the MPLE$_{\beta}$ (with $\beta = 0.3$) and the nonrobust maximum likelihood estimates in Table \ref{table 2}. To establish the outlier stability of the MPLE$_{\beta}$ method, we contaminate the original data artificially with $10$ additional points which are discrepant in comparison with the original data and can be viewed as outliers. These contaminating observations are listed in the Appendix \ref{appenC}. The component mean estimates of this artificially contaminated dataset by the maximum $\beta$-likelihood method (with $\beta=0.3$) and usual likelihood based method are also presented in Table \ref{table 2}. The stability of the MPLE$_\beta$ method is immediately observed in the minimal shifts in the  component mean estimates which obviously cannot be claimed for the maximum likelihood estimates. The variation in the estimates of the third cluster is higher than those of the other two, but here also the MPLE$_\beta$ estimator is far more stable then the MLE. The superiority of the MPLE$_\beta$ approach over the ordinary likelihood version is quite apparent, at least as far as the evidence of this example. The covariance matrix estimates for the original data and the artificially contaminated data for both likelihood based and MPLE$_{\beta}$ algorithms are presented in the Appendix \ref{appenD}. The superiority of the MPLE$_\beta$ approach over the ordinary likelihood version can again be observed in terms of greater stability of the estimated covariance matrix elements and greater sign consistency of the same.

\section{Concluding Remarks}
\label{con}
We have derived two important theoretical properties of the MPLE$_{\beta}$, namely, existence and weak consistency, for the MVN mixture models under appropriate theoretical conditions. Modern empirical process theory has been utilized to establish the consistency of the aforesaid estimators. The asymptotic distribution and convergence rates of these estimators are yet to be studied (probably requires the assumption of certain identifiability conditions) which we hope to develop in our future research. Two real data examples have also been presented to illustrate the practical utility of the MPLE$_\beta$ method.
\\
\\
 \textbf{Acknowledgement:} 
The research of AG is partially supported by the INSPIRE Faculty Research Grant from Department of Science and Technology, Government of India. The research of AB is supported by the Technology Innovation Hub at Indian Statistical Institute, Kolkata under
Grant NMICPS/006/MD/2020-21 of Department of Science and Technology, Government of India, dated 16.10.2020.
\begin{appendices}
	
	\section{Required Lemmas}
\label{appenA}	

\begin{lemma}\label{baz1}
	Consider the set-up of Theorem \ref{THM:Existence}, its proof and assume that $\pi_{j}>0$ for $j=1,\cdots,k$ in Equation (\ref{Eq19}). Then, we have the following results under the ER and NS constraints.
	\begin{enumerate}
		\item $g=k$ in Equation (\ref{Eq20}) for the mean sequence in the proof of Theorem \ref{THM:Existence}.
		\item The dispersion sequence $\{\boldsymbol{\Sigma}^{r}_{1},\boldsymbol{\Sigma}^{r}_{2},...,\boldsymbol{\Sigma}^{r}_{k}\}$, from the proof of Theorem \ref{THM:Existence}, only satisfies (\ref{Eq21}) (and not (\ref{Eq22}) or (\ref{Eq23})).
	\end{enumerate}
 \end{lemma}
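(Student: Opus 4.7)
The plan is to prove both parts by contradiction, exploiting the fact established in the proof of Theorem \ref{THM:Existence} that the maximizing sequence $\{\boldsymbol{\theta}^r\}$ satisfies
\[
\lim_{r\to\infty} L_\beta(\boldsymbol{\theta}^r,P) = \sup_{\boldsymbol{\theta}\in\boldsymbol{\Theta}_C} L_\beta(\boldsymbol{\theta},P) > 0,
\]
where strict positivity of the supremum follows by optimizing $c'$ in the construction of $\boldsymbol{\theta}^a$ using Assumption \ref{assump1}. Throughout, I will interchange limit and expectation via dominated convergence, justified by uniform bounds on the summand that come from the ER and NS constraints and the assumption $\pi_j^r\to\pi_j>0$ (which keeps $\log\pi_j^r$ bounded below).

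For Part 2, the NS constraint $m_r\geq c_1>0$ immediately rules out (\ref{Eq23}). To rule out (\ref{Eq22}), note that $M_r\to\infty$ combined with the ER constraint $M_r/m_r\leq c$ forces $m_r\to\infty$, so every eigenvalue of every $\boldsymbol{\Sigma}_j^r$ diverges. The elementary bound $\phi_p(\boldsymbol{X},\boldsymbol{\mu},\boldsymbol{\Sigma})\leq (2\pi)^{-p/2}|\boldsymbol{\Sigma}|^{-1/2}$ together with the closed form $\int\phi_p^{1+\beta}\,d\boldsymbol{x}=(2\pi)^{-p\beta/2}(1+\beta)^{-p/2}|\boldsymbol{\Sigma}|^{-\beta/2}$ shows that both the density contribution and the penalty integral vanish uniformly. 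Passing to the limit yields
\[
\lim_{r\to\infty} L_\beta(\boldsymbol{\theta}^r,P)=E_P\!\left[\sum_{j=1}^{k} Z_j(\boldsymbol{X},\boldsymbol{\theta}^\infty)\log\pi_j\right],
\]
which is strictly negative whenever $k\geq 2$, since $\pi_j>0$ and $\sum_j\pi_j=1$ force every $\pi_j<1$. This contradicts $\lim L_\beta>0$.

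For Part 1, I will use Part 2 to first pass to a further subsequence along which $\boldsymbol{\Sigma}_j^r\to\boldsymbol{\Sigma}_j$ (finite, positive definite) for every $j$. Assume for contradiction that $\|\boldsymbol{\mu}_{j_0}^r\|\to\infty$ for some $j_0$. Pointwise in $\boldsymbol{X}$, $D_{j_0}(\boldsymbol{X},\boldsymbol{\theta}^r)\to 0$, while for any $l$ with convergent $\boldsymbol{\mu}_l^r\to\boldsymbol{\mu}_l$ we have $D_l\to\pi_l\phi_p(\boldsymbol{X},\boldsymbol{\mu}_l,\boldsymbol{\Sigma}_l)>0$; hence $Z_{j_0}(\boldsymbol{X},\boldsymbol{\theta}^r)\to 0$ almost surely under $P$. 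The key step is to construct the comparison parameter $\tilde{\boldsymbol{\theta}}\in\boldsymbol{\Theta}_C$ by setting $\tilde{\pi}_{j_0}=0$ and $\tilde{\pi}_l=\pi_l/(1-\pi_{j_0})$ for $l\neq j_0$, while keeping the limiting means and covariances of the surviving components (and assigning $\tilde{\boldsymbol{\mu}}_{j_0},\tilde{\boldsymbol{\Sigma}}_{j_0}$ arbitrarily subject to the constraints). A common rescaling of $\{\pi_l\}_{l\neq j_0}$ by the factor $(1-\pi_{j_0})^{-1}$ does not alter the argmax defining $Z_l$, so $\tilde{Z}_l$ equals the limiting $Z_l^\infty$ for each $l\neq j_0$, and a direct computation shows
\[
L_\beta(\tilde{\boldsymbol{\theta}},P)-\lim_{r\to\infty} L_\beta(\boldsymbol{\theta}^r,P)=\log\!\left(\frac{1}{1-\pi_{j_0}}\right)>0,
\]
contradicting $\lim L_\beta(\boldsymbol{\theta}^r,P)=\sup_{\boldsymbol{\Theta}_C}L_\beta$. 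The degenerate subcase where \emph{every} mean diverges is handled more directly: each $h_j^\infty:=\log\pi_j-(1+\beta)^{-1}\int\phi_p^{1+\beta}\,d\boldsymbol{x}$ is strictly negative (the NS constraint keeps the integral bounded below by a positive quantity), giving $\lim L_\beta<0$ once more.

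The main obstacle is the comparison step in Part 1: one must carefully verify that the rescaling preserves the argmax structure underlying $Z_l$, fix a convention so that $0\cdot\log 0=0$ when $\tilde{\pi}_{j_0}=0$, and produce an integrable envelope that legitimizes passing the limit inside $E_P$. Once this bookkeeping is done, the contradictions follow transparently from the positivity of the supremum granted by Assumption \ref{assump1}.
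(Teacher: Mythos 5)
Your argument is essentially the paper's own proof. Part 2 is handled identically: NS rules out (\ref{Eq23}); ER converts $M_r\to\infty$ into $m_r\to\infty$, the density and integral terms vanish, and the residual $E_P\bigl[\sum_j Z_j\log\pi_j\bigr]<0$ contradicts (\ref{Eq18}) (note that the nonnegativity in (\ref{Eq18}) suffices throughout --- strict positivity of the supremum is never actually needed). Part 1 likewise uses the paper's two ingredients --- the assignment functions of divergent components vanish in the limit, and renormalizing the surviving weights strictly increases the objective, contradicting optimality of the maximizing sequence --- but your comparison step needs tightening in the case where more than one but not all means diverge: zeroing out a single index $j_0$ and rescaling by $(1-\pi_{j_0})^{-1}$ leaves other divergent components (which have no limiting mean) carrying positive weight, so the exact identity $L_\beta(\tilde{\boldsymbol{\theta}},P)-\lim_r L_\beta(\boldsymbol{\theta}^r,P)=\log\bigl((1-\pi_{j_0})^{-1}\bigr)$ fails there; the paper discards all $k-g$ divergent components simultaneously and renormalizes by $\sum_{j=1}^{g}\pi_j$, and your argument goes through verbatim with that substitution.
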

\begin{proof}
	To prove the lemma, we need the following inequalities which seem to hold trivially from the definitions of $M_r$ and $m_r$.
	\begin{enumerate}
		\item[\textbf{I1}] For $1 \leq j \leq k$ and $r \in \mathbb{N}$,
		\begin{align*}
		m_{r}^{p}\leq |\boldsymbol{\Sigma}^{r}_{j}|\leq M_{r}^{p}.
		\end{align*}
		\item[\textbf{I2}] For $1 \leq j \leq k$ and $r \in \mathbb{N}$,
		\begin{align*}
		(\boldsymbol{X}-\boldsymbol{\mu}^{r}_{j})^{'}(\boldsymbol{\Sigma}^{r}_{j})^{-1}(\boldsymbol{X}-\boldsymbol{\mu}^{r}_{j})\geq M^{-1}_{r}||\boldsymbol{X}-\boldsymbol{\mu}^{r}_{j}||^{2}.
		\end{align*}
		\item[\textbf{I3}] For $1 \leq j \leq k$ and $r \in \mathbb{N}$,
		\begin{align*}
		\frac{1}{1+\beta}\int \phi_{p}^{1+\beta}(\boldsymbol{x},\boldsymbol{\mu}^{r}_{j},\boldsymbol{\Sigma}^{r}_{j})\;d\boldsymbol{x}&=\frac{1}{(2\pi)^{\frac{p\beta}{2}}|\boldsymbol{\Sigma}^{r}_{j}|^{\frac{\beta}{2}}(1+\beta)^{\frac{p+2}{2}}}\\
		&\geq \frac{1}{(2\pi)^{\frac{p\beta}{2}}M_{r}^{\frac{p\beta}{2}}(1+\beta)^{\frac{p+2}{2}}}.
		\end{align*}
	\end{enumerate}
	
	Using the above inequalities, we have,
	\begin{equation}
	\centering 
	\label{Eq24}
	L_{\beta}(\boldsymbol{\theta}^{r},P)\leq E_{P}\left[\sum_{j=1}^{k}Z_{j}(\boldsymbol{X},\boldsymbol{\theta}^{r})\left[\log\;\pi^{r}_{j}+\frac{1}{\beta(2\pi)^{\frac{p\beta}{2}}m_{r}^{\frac{p\beta}{2}}}e^{-\frac{\beta M^{-1}_{r}}{2}||\boldsymbol{X}-\boldsymbol{\mu}^{r}_{j}||^{2}}-\frac{1}{(2\pi)^{\frac{p\beta}{2}}M_{r}^{\frac{p\beta}{2}}(1+\beta)^{\frac{p+2}{2}}}\right]\right].
	\end{equation}
	Let us first prove the second part of Lemma \ref{baz1} . Suppose, if possible, (\ref{Eq22}) holds. Then the eigenvalue ratio constraint implies,
	\begin{align*}
	m_{r}\geq \frac{M_{r}}{c}\rightarrow \infty.
	\end{align*}
	These would imply,
	\begin{align*}
	\underset{r \rightarrow \infty}{\text{lim}}L_{\beta}(\boldsymbol{\theta}^{r},P)&\leq \underset{r \rightarrow \infty}{\text{lim}}E_{P}\left[\sum_{j=1}^{k}Z_{j}(\boldsymbol{X},\boldsymbol{\theta}^{r})\left[\log\;\pi^{r}_{j}+\frac{1}{\beta(2\pi)^{\frac{p\beta}{2}}m_{r}^{\frac{p\beta}{2}}}e^{-\frac{\beta M^{-1}_{r}}{2}||\boldsymbol{X}-\boldsymbol{\mu}^{r}_{j}||^{2}}-\frac{1}{(2\pi)^{\frac{p\beta}{2}}M_{r}^{\frac{p\beta}{2}}(1+\beta)^{\frac{p+2}{2}}}\right]\right]\\
	&\leq \underset{r \rightarrow \infty}{\text{lim}}E_{P}\left[\sum_{j=1}^{k}\log\;\pi^{r}_{j}\right]<0
	\end{align*}
	which contradicts (\ref{Eq18}).
	Now, let us assume that (\ref{Eq23}) holds. But this contradicts the non-singularity constraint. Hence the dispersion sequence can only satisfy the condition in Equation (\ref{Eq21}), and not the conditions (\ref{Eq22}) or (\ref{Eq23}).
	
	To prove the first part of the Lemma (i.e., $g=k$ in Equation (\ref{Eq20})), let us observe that if $g=0$ then, $||\boldsymbol{\mu}^{r}_{j}||\rightarrow \infty$ and thus $e^{-||\boldsymbol{X}-\boldsymbol{\mu}^{r}_{j}||^{2}}\rightarrow 0$ for all $1\leq j\leq k$. Hence, (\ref{Eq24}) again implies,
	$\underset{r\rightarrow\infty}{\text{lim}}L_{\beta}(\boldsymbol{\theta}^{r},P)\leq 0.$
	which contradicts (\ref{Eq18}). Hence $g>0$.
	
	Next let us assume that, $1\leq g< k$. Then {\it bounded convergence theorem}  implies,
	\begin{align}
	\label{Eq25}
	E_{P}\left(\sum_{j=g+1}^{k}Z_{j}(\boldsymbol{X},\boldsymbol{\theta}^{r})\right)\rightarrow 0.
	\end{align}
	Now,
	\begin{align*}
	\underset{r\rightarrow\infty}{\text{limsup}}\;L_{\beta}(\boldsymbol{\theta}^{r},P)&=\underset{r\rightarrow\infty}{\text{limsup}}\;E_{P}\left[\sum_{j=1}^{k}Z_{j}(\boldsymbol{X},\boldsymbol{\theta}^{r})\left[\log\;\pi^{r}_{j}+\frac{1}{\beta}\phi_{p}^{\beta}(\boldsymbol{X},\boldsymbol{\mu}^{r}_{j},\boldsymbol{\Sigma}^{r}_{j})-\frac{1}{1+\beta}\int \phi_{p}^{1+\beta}(\boldsymbol{x},\boldsymbol{\mu}^{r}_{j},\boldsymbol{\Sigma}^{r}_{j}) \;d\boldsymbol{x}\right]\right]\\
	&\leq \underset{r\rightarrow\infty}{\text{limsup}}\;E_{P}\left[\sum_{j=1}^{g}Z_{j}(\boldsymbol{X},\boldsymbol{\theta}^{r})\left[\log\;\pi^{r}_{j}+\frac{1}{\beta}\phi_{p}^{\beta}(\boldsymbol{X},\boldsymbol{\mu}^{r}_{j},\boldsymbol{\Sigma}^{r}_{j})-\frac{1}{1+\beta}\int \phi_{p}^{1+\beta}(\boldsymbol{x},\boldsymbol{\mu}^{r}_{j},\boldsymbol{\Sigma}^{r}_{j}) \;d\boldsymbol{x}\right]\right]\\
	&+\underset{r\rightarrow\infty}{\text{limsup}}\;E_{P}\left[\sum_{j=g+1}^{k}Z_{j}(\boldsymbol{X},\boldsymbol{\theta}^{r})\left[\log\;\pi^{r}_{j}+\frac{1}{\beta}\phi_{p}^{\beta}(\boldsymbol{X},\boldsymbol{\mu}^{r}_{j},\boldsymbol{\Sigma}^{r}_{j})-\frac{1}{1+\beta}\int \phi_{p}^{1+\beta}(\boldsymbol{x},\boldsymbol{\mu}^{r}_{j},\boldsymbol{\Sigma}^{r}_{j}) \;d\boldsymbol{x}\right]\right].
	\end{align*}
	The second term in the right hand side of the above inequality less than equal to $0$ due to (\ref{Eq25}). Hence,
	\begin{align*}
	\underset{r\rightarrow\infty}{\text{limsup}}\;L_{\beta}(\boldsymbol{\theta}^{r},P)
	&\leq\underset{r\rightarrow\infty}{\text{limsup}}\;E_{P}\left[\sum_{j=1}^{g}Z_{j}(\boldsymbol{X},\boldsymbol{\theta}^{r})\left[\log\;\pi^{r}_{j}+\frac{1}{\beta}\phi_{p}^{\beta}(\boldsymbol{X},\boldsymbol{\mu}^{r}_{j},\boldsymbol{\Sigma}^{r}_{j})-\frac{1}{1+\beta}\int \phi_{p}^{1+\beta}(\boldsymbol{x},\boldsymbol{\mu}^{r}_{j},\boldsymbol{\Sigma}^{r}_{j}) \;d\boldsymbol{x}\right]\right]\\
	&=E_{P}\left[\sum_{j=1}^{g}Z_{j}(\boldsymbol{X},\boldsymbol{\theta}^{*})\left[\log\;\pi_{j}+\frac{1}{\beta}\phi_{p}^{\beta}(\boldsymbol{X},\boldsymbol{\mu}_{j},\boldsymbol{\Sigma}_{j})-\frac{1}{1+\beta}\int \phi_{p}^{1+\beta}(\boldsymbol{x},\boldsymbol{\mu}_{j},\boldsymbol{\Sigma}_{j}) \;d\boldsymbol{x}\right]\right]\\
	\end{align*}
	where,
	\begin{align*}
	\boldsymbol{\theta}^{*}&= \left(\pi_{1}, \pi_{2},...., \pi_{g}, \boldsymbol{\mu}_{1}, \boldsymbol{\mu}_{2},...,\boldsymbol{\mu}_{g}, \boldsymbol{\Sigma}_{1}, \boldsymbol{\Sigma}_{2},..., \boldsymbol{\Sigma}_{g}\right)\\
	&=\underset{r\rightarrow\infty}{\text{lim}}\; \left(\pi^{r}_{1}, \pi^{r}_{2},...., \pi^{r}_{g}, \boldsymbol{\mu}^{r}_{1}, \boldsymbol{\mu}^{r}_{2},...,\boldsymbol{\mu}^{r}_{g}, \boldsymbol{\Sigma}^{r}_{1}, \boldsymbol{\Sigma}^{r}_{2},..., \boldsymbol{\Sigma}^{r}_{g}\right)
	\end{align*}
	and $\pi_{j}$, $\boldsymbol{\mu}_{j}$ and $\boldsymbol{\Sigma}_{j}$ are as in (\ref{Eq19}), (\ref{Eq20}) and (\ref{Eq21}) respectively. Let us observe that $\sum_{j=1}^{g}\pi_{j}<1$ due to the assumption that $\pi_{j}>0$ for all $1\leq j \leq k$. Motivated by this observation, we introduce the following standardized weights,
	\[
	\pi^{'}_{j}= 
	\begin{cases}
	\frac{\pi_{j}}{\sum_{j=1}^{g}\pi_{j}},& \text{for } 1 \leq j \leq g\\
	0, & \text{for } j>g.
	\end{cases}
	\]
	It is easy to observe that,
	\begin{enumerate}
		\item For all $1 \leq j \leq g$, $\log\;\pi_{j}<\log\;\pi^{'}_{j}.$
		\item This aforesaid modification keeps the orderings of the discriminant functions $\{D_{j}(\boldsymbol{X},\cdot):\;1 \leq j \leq k\}$ invariant so that values of the assignment functions $\{Z_{j}(\boldsymbol{X},\cdot):\;1 \leq j \leq k\}$ would not change.
	\end{enumerate}
	The aforesaid facts together imply,
	\begin{align*}
	\underset{r\rightarrow\infty}{\text{limsup}}\;L_{\beta}(\boldsymbol{\theta}^{r},P)&\leq E_{P}\left[\sum_{j=1}^{g}Z_{j}(\boldsymbol{X},\boldsymbol{\theta}^{*})\left[\log\;\pi_{j}+\frac{1}{\beta}\phi_{p}^{\beta}(\boldsymbol{X},\boldsymbol{\mu}_{j},\boldsymbol{\Sigma}_{j})-\frac{1}{1+\beta}\int \phi_{p}^{1+\beta}(\boldsymbol{x},\boldsymbol{\mu}_{j},\boldsymbol{\Sigma}_{j}) \;d\boldsymbol{x}]\right]\right]\\
	&< E_{P}\left[\sum_{j=1}^{g}Z_{j}(\boldsymbol{X},\boldsymbol{\theta}^{*'})\left[\log\;\pi^{'}_{j}+\frac{1}{\beta}\phi_{p}^{\beta}(\boldsymbol{X},\boldsymbol{\mu}_{j},\boldsymbol{\Sigma}_{j})-\frac{1}{1+\beta}\int \phi_{p}^{1+\beta}(\boldsymbol{x},\boldsymbol{\mu}_{j},\boldsymbol{\Sigma}_{j}) \;d\boldsymbol{x}]\right]\right]\\
	&=L_{\beta}(\boldsymbol{\theta}^{*'},P),
	\end{align*}
	where $\boldsymbol{\theta}^{*'}= \left(\pi^{'}_{1}, \pi^{'}_{2},...., \pi^{'}_{g}, \boldsymbol{\mu}_{1}, \boldsymbol{\mu}_{2},...,\boldsymbol{\mu}_{g}, \boldsymbol{\Sigma}_{1}, \boldsymbol{\Sigma}_{2},..., \boldsymbol{\Sigma}_{g}\right) \in \boldsymbol{\Theta}_{C}$. But this contradicts (\ref{Eq17}). Hence, $g=k$, completing the proof of the Lemma. 
\end{proof}

\begin{lemma}\label{boundedness}
	For the optimizers $\{\hat{\boldsymbol{\theta}}_{n}:\;n \in \mathbb{N}\}$  of the objective function given in (\ref{Eq4}), there exists a compact set $K \subset \boldsymbol{\Theta}_{C}$ such that $\hat{\boldsymbol{\theta}}_{n} \in K$ a.e. $[F]$ for all sufficiently large $n$ with probability $1$ under the ER and NS constraints and Assumption \ref{assump1}.
\end{lemma}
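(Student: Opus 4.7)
The strategy is to mirror the structural logic of Lemma \ref{baz1}, but applied to the random sample-dependent optimizers $\hat{\boldsymbol{\theta}}_n$ rather than to an arbitrary maximizing sequence for a fixed $P$. First I would pin down a positive lower bound on the empirical optimum by re-using the specific parameter $\boldsymbol{\theta}^a$ constructed in the proof of Theorem \ref{THM:Existence}. Since $\boldsymbol{\theta}^a$ is a fixed element of $\boldsymbol{\Theta}_C$, the classical SLLN gives $L_\beta(\boldsymbol{\theta}^a, F_n) \to L_\beta(\boldsymbol{\theta}^a, F)$ almost surely, and the latter is strictly positive by Assumption \ref{assump1} (the exact calculation used inside Theorem \ref{THM:Existence}). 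Since $\hat{\boldsymbol{\theta}}_n$ maximizes $L_\beta(\cdot, F_n)$, this yields $\liminf_{n} L_\beta(\hat{\boldsymbol{\theta}}_n, F_n) \ge L_\beta(\boldsymbol{\theta}^a, F) > 0$ a.s., so on a probability-one event there is $\varepsilon>0$ with $L_\beta(\hat{\boldsymbol{\theta}}_n, F_n) \ge \varepsilon$ for all large $n$.

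Working pointwise on that probability-one event, the weights $\hat{\pi}^n_j$ already lie in the compact simplex, and the smallest eigenvalue is bounded below by $c_1$ via the NS constraint. Thus the only obstacles to compactness are (i) an explosion of the largest eigenvalue $M_n$ and (ii) divergence of some mean $\|\hat{\boldsymbol{\mu}}^n_j\|$. For (i), suppose a subsequence has $M_{n_l}\to\infty$. The ER constraint forces $m_{n_l}\ge M_{n_l}/c\to\infty$ as well, so the uniform pointwise bound on $\phi_p^{\beta}$ (inequality I1 in Lemma \ref{baz1}) and the integral-term bound (I3) go to zero. Plugging into the upper bound (\ref{Eq24}) with $P=F_{n_l}$ gives $L_\beta(\hat{\boldsymbol{\theta}}_{n_l},F_{n_l})\le E_{F_{n_l}}[\sum_j Z_j\log\hat{\pi}_j^{n_l}]+o(1)\le o(1)$, contradicting the lower bound.

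For (ii), I would assume $M_n$ is now bounded in some $[c_1,M^\ast]$ and extract a further subsequence along which each $\hat{\boldsymbol{\mu}}^{n_l}_j$ either converges or diverges and the weights converge. Using (\ref{Eq24}), for any radius $R$ chosen large enough that eventually $\|\hat{\boldsymbol{\mu}}^{n_l}_j\|>2R$ at the diverging indices, split the empirical average over $\{X_i:\|X_i\|\le R\}$ and its complement; the former contributes at most $e^{-\alpha R^2/2}$ times a constant, while the latter is controlled by $P_{F_{n_l}}(\|X\|>R)$, which converges a.s.\ to $P_F(\|X\|>R)$ by the Glivenko-Cantelli theorem for the VC class of closed balls. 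Letting $l\to\infty$ first and then $R\to\infty$ shows the diverging components contribute asymptotically nothing, so the problem reduces to a sub-model on the non-diverging indices, to which the weight-re-standardization argument of Lemma \ref{baz1} applies verbatim and yields the contradiction.

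The principal obstacle is the third step: the objective $L_\beta(\cdot, F_n)$ is random and its indexing distribution $F_n$ varies with $n$, so one cannot simply invoke Lemma \ref{baz1} (which concerns sequences for a \emph{fixed} $P$) nor a global uniform law (the full Glivenko-Cantelli property of $\mathcal{F}$ is exactly what Theorem \ref{THM:Consistency} wants to conclude \emph{using} this lemma). The quantitative control must therefore be done by hand via the elementary tail-probability convergence described above, which acts as a bridge between the analytic content of Lemma \ref{baz1} and the random sequence $\{\hat{\boldsymbol{\theta}}_n\}$.
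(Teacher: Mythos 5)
Your proposal takes essentially the same route as the paper's own (very terse) proof: lower-bound $L_{\beta}(\hat{\boldsymbol{\theta}}_{n},F_{n})$ by $L_{\beta}(\boldsymbol{\theta}^{a},F_{n})$, use the SLLN together with Assumption \ref{assump1} to make that bound positive for all large $n$ almost surely, and then rerun the exclusion arguments of Lemma \ref{baz1} to rule out $M_{n}\rightarrow\infty$ and $\|\hat{\boldsymbol{\mu}}^{n}_{j}\|\rightarrow\infty$, with $m_{n}\rightarrow 0$ already blocked by the NS constraint. You are in fact more careful than the paper on the one delicate point --- that the measure $F_{n}$ varies with $n$, so Lemma \ref{baz1} cannot literally be cited --- via your ball-splitting tail estimate; the only caveat is that the same varying-measure issue also touches your final re-standardization step on the non-diverging block (one still needs $E_{F_{n_{l}}}[\cdot]\rightarrow E_{F}[\cdot]$ uniformly over a compact parameter set there, e.g.\ by a Glivenko--Cantelli argument restricted to that compact set, which is not circular), so ``applies verbatim'' is a slight overstatement, though the paper itself supplies no more detail at that point.
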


\begin{proof}
	The proof of this lemma proceeds along the same line as in the proof of the existence of the estimators. To prove our claim, it is enough to show that the sequence of the largest eigenvalues of all the dispersion matrices $M_{n}$ does not converge to $\infty$, the sequence of the smallest eigenvalues of all the dispersion matrices $m_{n}$ does not converge to $0$ and the estimated centers $\hat{\boldsymbol{\mu}}^{n}_{j}$ does not satisfy $||\hat{\boldsymbol{\mu}}^{n}_{j}||\rightarrow\infty$ for any $1\leq j\leq k$ a.e. $[F]$ as $n \rightarrow \infty$. To prove these, first let us observe that,
	\begin{align*}
	L_{\beta}(\hat{\boldsymbol{\theta}}_{n},F_{n})&=\underset{\boldsymbol{\Theta}_{C}}{\text{sup}} L_{\beta}(\boldsymbol{\theta} , F_{n})\\
	&\geq L_{\beta}(\boldsymbol{\theta}^{a},F_{n})\\
	&=\frac{1}{n\beta}\sum_{i=1}^{n}\phi^{\beta}_{p}(\boldsymbol{X}_{i},\boldsymbol{\mu^{0}_{1}},c^{'}\boldsymbol{\Sigma^{0}_{1}})-\frac{1}{1+\beta}\int \phi^{1+\beta}_{p}(\boldsymbol{x},0,c^{'}\boldsymbol{\Sigma^{0}_{1}})\;d\boldsymbol{x}.
	\end{align*}
	Now, using the same methodology to prove (\ref{Eq18}) in case of $P=F_{n}$ for sufficiently large sample sizes by virtue of SLLN (in Lemma \ref{baz1}), we can conclude that,
	\begin{align*}
	L_{\beta}(\hat{\boldsymbol{\theta}}_{n},F_{n})\geq 0
	\end{align*}
	for sufficiently large sample sizes. Now, we can show that the estimators $\hat{\boldsymbol{\theta}}_{n}$ are uniformly bounded for all large enough $n$ by an argument similar to the one used to proof Theorem \ref{THM:Existence}  (and Lemma \ref{baz1}).
\end{proof}
\newpage
\section{Contaminating Observations Added to the Thyroid Gland Data}
\label{appenC}
\begin{table}[!h]
	\centering 
	\small 
	\begin{tabular}{c c c c c c } 
		\hline
		Units&       \multicolumn{5}{c}{Contaminating Observations}  \\ [1ex] 
		\hline
		1	&	155.704	&	36.535	&	17.451	&	66.078	&	64.804	\\
		2	&	156.124	&	35.039	&	20.026	&	67.2	&	66.104	\\
		3	&	154.383	&	33.625	&	20.428	&	68.302	&	66.031	\\
		4	&	156.89	&	34.951	&	20.496	&	66.417	&	65.875	\\
		5	&	156.572	&	34.154	&	17.831	&	68.719	&	65.724	\\
		6	&	155.042	&	35.24	&	18.686	&	68.095	&	65.878	\\
		7	&	154.823	&	33.75	&	20.586	&	66.143	&	66.802	\\
		8	&	153.185	&	36.636	&	19.51	&	66.242	&	68.082	\\
		9	&	155.616	&	35.583	&	19.511	&	67.386	&	64.267	\\
		10	&	155.996	&	36.244	&	20.071	&	66.41	&	64.583	\\

		\hline 
	\end{tabular}
	\caption{Contaminating observations added to the Thyroid Gland Data. }
	\label{table 5}
	
\end{table}
\section{Component Covariance Matrix Estimates for the Thyroid Data}
\label{appenD}
\begin{table}[!h]
	\small
	\begin{tabular}{c c c c c c c c c c c c} 
		\hline
		Cluster &       \multicolumn{5}{c}{Original Data}  & \multicolumn{5}{c}{Contaminated Data}  \\ [1ex] 
		
		\hline
		1	&	218.051	&	-30.978	&	-21.142	&	-1.284	&	0.608	&	&	218.047	&	-30.973	&	-21.148	&	-1.281	&	0.604	\\
		&	-30.975	&	20.704	&	5.188	&	0.154	&	0.006	&	&	-30.978	&	20.704	&	5.182	&	0.154	&	0.007	\\
		&	-21.144	&	5.188	&	5.130	&	0.063	&	0.014	&	&	-21.142	&	5.188	&	5.125	&	0.067	&	0.011	\\
		&	-1.281	&	0.154	&	0.063	&	0.162	&	-0.03	&	&	-1.286	&	0.152	&	0.062	&	0.162	&	-0.031	\\
		&	0.604	&	0.006	&	0.011	&	-0.039	&	0.061	&	&	0.607	&	0.006	&	0.011	&	-0.035	&	0.061	\\
		&		&		&		&		&		&	&		&		&		&		&		\\
		2	&	77.36	&	9.764	&	2.166	&	-0.355	&	-0.129	&	&	77	&	9.469	&	2.126	&	-0.325	&	0.783	\\
		&	9.764	&	6.356	&	0.515	&	-0.04	&	-1.283	&	&	9.469	&	6.326	&	0.514	&	-0.047	&	-1.493	\\
		&	2.166	&	0.515	&	0.265	&	-0.005	&	0.003	&	&	2.126	&	0.514	&	0.263	&	-0.006	&	-0.009	\\
		&	-0.355	&	-0.04	&	-0.005	&	0.245	&	0.106	&	&	-0.325	&	-0.047	&	-0.006	&	0.243	&	0.138	\\
		&	-0.129	&	-1.283	&	0.003	&	0.106	&	3.784	&	&	0.783	&	-1.493	&	-0.009	&	0.138	&	5.037	\\
		&		&		&		&		&		&	&		&		&		&		&		\\
		3	&	72.254	&	-5.453	&	-0.858	&	-12.438	&	-58.142	&	&	248.724	&	203.876	&	119.329	&	317.76	&	248.581	\\
		&	-5.453	&	4.118	&	0.807	&	-14.686	&	11.245	&	&	203.876	&	219.471	&	126.209	&	341.553	&	320.974	\\
		&	-0.858	&	0.807	&	0.285	&	-2.802	&	3.722	&	&	119.329	&	126.209	&	73.552	&	201.171	&	183.511	\\
		&	-12.438	&	-14.686	&	-2.802	&	153.898	&	-6.269	&	&	317.76	&	341.553	&	201.171	&	669.271	&	490.303	\\
		&	-58.142	&	11.245	&	3.722	&	-6.269	&	245.921	&	&	248.581	&	320.974	&	183.511	&	490.303	&	628.838	\\

		\hline
	\end{tabular}
	\caption{Component covariance matrix estimates for the Thyroid data in case of the maximum likelihood estimation.}
	\label{table3.1}
\end{table}

\begin{table}[!h]
	\small
	\begin{tabular}{c c c c c c c c c c c c} 
		\hline
		Cluster &       \multicolumn{5}{c}{Original Data}  & \multicolumn{5}{c}{Contaminated Data}  \\ [1ex] 
		
		\hline
		1	&	155.261	&	-28.989	&	-17.165	&	-0.107	&	0.002	&	&	155.263	&	-28.983	&	-17.165	&	-0.105	&	0.002	\\
		&	-28.989	&	24.782	&	5.454	&	-0.233	&	0.251	&	&	-28.984	&	24.782	&	5.449	&	-0.237	&	0.256	\\
		&	-17.165	&	5.454	&	3.845	&	-0.058	&	0.082	&	&	-17.162	&	5.453	&	3.841	&	-0.052	&	0.086	\\
		&	-0.102	&	-0.233	&	-0.058	&	0.167	&	-0.021	&	&	-0.105	&	-0.231	&	-0.053	&	0.162	&	-0.02	\\
		&	0.002	&	0.251	&	0.082	&	-0.02	&	0.057	&	&	0.002	&	0.252	&	0.084	&	-0.021	&	0.055	\\
		&		&		&		&		&		&	&		&		&		&		&		\\
		2	&	66.207	&	5.769	&	1.48	&	-0.034	&	2.163	&	&	65.676	&	5.728	&	1.47	&	-0.033	&	2.136	\\
		&	5.769	&	4.493	&	0.393	&	-0.03	&	-0.452	&	&	5.728	&	4.456	&	0.391	&	-0.029	&	-0.436	\\
		&	1.48	&	0.393	&	0.22	&	-0.002	&	0.099	&	&	1.47	&	0.391	&	0.218	&	-0.002	&	0.1	\\
		&	-0.034	&	-0.03	&	-0.002	&	0.202	&	0.041	&	&	-0.033	&	-0.029	&	-0.002	&	0.199	&	0.039	\\
		&	2.163	&	-0.452	&	0.099	&	0.041	&	3.232	&	&	2.136	&	-0.436	&	0.1	&	0.039	&	3.169	\\
		&		&		&		&		&		&	&		&		&		&		&		\\
		3	&	62.309	&	-3.334	&	-1.324	&	-5.678	&	-109.539	&	&	98.298	&	-13.328	&	-2.083	&	21.443	&	-69.5	\\
		&	-3.334	&	2.852	&	0.734	&	-4.84	&	22.356	&	&	-13.328	&	6.382	&	1.236	&	-12.788	&	12.732	\\
		&	-1.324	&	0.734	&	0.311	&	-1.418	&	6.925	&	&	-2.083	&	1.236	&	0.388	&	-2.276	&	4.817	\\
		&	-5.678	&	-4.84	&	-1.418	&	31.978	&	-10.72	&	&	21.443	&	-12.788	&	-2.276	&	52.686	&	-0.003	\\
		&	-109.539	&	22.356	&	6.925	&	-10.72	&	386.56	&	&	-69.5	&	12.732	&	4.817	&	-0.003	&	321.714	\\

		\hline
	\end{tabular}
	\caption{Component covariance matrix estimates for the Thyroid data in case of the minimum DPD estimation.}
	\label{table3.2}
\end{table}

\end{appendices}
\newpage

 \end{document}